\newtheoremstyle{bfnote}%
{}{}%
{\slshape}{}%
{\bfseries}{\bfseries.}%
{ }%
{\thmname{#1}\thmnumber{ #2}\thmnote{ \ep{\normalfont{}#3}}}
\theoremstyle{bfnote}
\newtheorem{theorem}{Theorem}[section]
\newtheorem{lemma}[theorem]{Lemma}
\newtheorem{prop}[theorem]{Proposition}
\newtheorem{cor}[theorem]{Corollary}
\newtheorem{claim}[theorem]{Claim}
\theoremstyle{definition}
\newtheorem{example}[theorem]{Example}
\newtheorem{definition}[theorem]{Definition}
\newtheorem{remark}[theorem]{Remark}
\DeclareMathOperator{\ext}{ext}
\newcommand{\R}{\mathbb{R}}
\renewcommand{\epsilon}{\varepsilon}
\newcommand{\eps}{\epsilon}
\renewcommand{\phi}{\varphi}
\renewcommand{\theta}{\vartheta}
\renewcommand{\leq}{\leqslant}
\renewcommand{\geq}{\geqslant}
\newcommand{\bemph}[1]{{\normalfont#1}}
\newcommand{\ep}[1]{\bemph{(}#1\bemph{)}}
\newcommand{\red}[1]{{\color{red}#1}}
\newcommand{\todo}[1]{\red{\textbf{\upshape{[#1]}}}}
\newcommand{\Aut}{\mathrm{Aut}}
\newcommand{\w}{\mathbf{w}}
\newcommand{\abs}[1]{\left\vert#1\right\vert}
\title{\sffamily Measurable one-ended spanning trees}
\date{}
\author[Bowen]{Matt Bowen}
\address{\normalfont (MB) 
Mathematical Institute, University of Oxford, Oxford, UK}
\email{matthew.bowen@maths.ox.ac.uk}
\author[Girao]{António Girão}
\address{\normalfont (AG) 
Mathematical Institute, University of Oxford, Oxford, UK}
\email{antonio.girao@maths.ox.ac.uk}
\author[Sanchez]{H\'ector Jard\'on-S\'anchez}
\address{\normalfont (HJS) 
Institute of Mathematics, Jagiellonian University, Krak\'ow, Poland}
\email{hector.jardon.sanchez@uj.edu.pl}
\author[Terlov]{Grigory Terlov}
\address{\normalfont (GT) 
Department of Statistics and Operations Research,
University of North Carolina at Chapel Hill, NC, USA}
\email{gterlov@unc.edu}
\thanks{}
\numberwithin{equation}{section}
\titleformat{\section}[block]{\large\bfseries\sffamily}{\thesection.}{1ex}{}
\titleformat{\subsection}[block]{\bfseries\sffamily}{\thesubsection.}{1ex}{}
\titleformat{\subsubsection}[block]{\itshape}{\bfseries\upshape\sffamily\thesubsubsection.}{1ex}{}
\titlespacing*{\section}{0pt}{*3}{*1}
\titlespacing*{\subsection}{0pt}{*3}{*1}
\titlespacing*{\subsubsection}{0pt}{*3}{*1}
\begin{document}

\begin{abstract}
    We show that a one-ended, locally finite, measurable graph on a standard probability space admits a measurable one-ended spanning subtree if and only if it is measure-hyperfinite. This answers a question posed by Bowen, Poulin, and Zomback and extends recent results of Tim\'ar and Conley, Gaboriau, Marks, and Tucker-Drob.
\end{abstract}

\maketitle
\section{Introduction and main results}
In this paper, we are interested in measurable combinatorics, the study of properties and applications of graphs defined on measure spaces. Our main objects of interest are measurable graphs. A Borel graph $\mathcal{G}=(X,E)$ is a graph whose vertex set $X$ is a standard Borel space and the edge set $E$ is a Borel subset of $X^2$. Such a graph is called measurable if $X$ is  equipped with a Borel probability measure $\nu$ (e.g.\ $[0,1]$ with the Lebesgue measure). Background on this subject can be found in \cite{kechris.marks,pikhurko2020borel}. One of the central structural notions in this field is hyperfiniteness: a measurable graph $\mathcal{G}$ on $(X,\nu)$ is called (measure-)\textbf{hyperfinite} (or $\nu$-hyperfinite for short) if it can be written as an increasing union of measurable graphs whose connected components are finite $\nu$-a.e.

While the compactness theorem from first-order logic implies that many classical results from finite graph theory immediately generalize to infinite graphs, these extensions rely on the axiom of choice and typically only yield non-measurable results. 
In contrast, if one is only interested in measurable or Borel solutions to combinatorial problems, then the situation is quite different, even in the very constrained setting of acyclic graphs.  
For example, Marks \cite{Marks.determinacy} has constructed, for all $d\geq 2$, a $d$-regular acyclic Borel graph with no Borel $d$-coloring, showing that the Borel version of Brooks' theorem need not hold. 
This was later extended to give examples even for hyperfinite graphs \cite{conley2020borel}.
Similarly, Kun has constructed for all $d\geq 2$ a $d$-regular acyclic measurable graph with no measurable perfect matching, showing that Hall's theorem need not hold in the measure setting \cite{gabor.new}.

Although these examples are acyclic, they are leafless and hence belong to a quite special subclass of trees. The following definition constitutes a principal obstruction to well-behaved measurable combinatorics.
\begin{definition}
    We say that a connected locally finite graph $G$ is $\mathbf{\kappa}$\textbf{-ended} if $\kappa$ is the supremum number of infinite connected components that can be obtained by removing a finite set of vertices from $G.$  We say that a locally finite measurable graph $\mathcal{G}$ on $(X,\nu)$ is $\kappa$\textbf{-ended} if for $\nu$-a.e.\ $x\in X$ its $\mathcal{G}$-connected components are $\kappa$-ended, and we say that $\mathcal{G}$ admits a measurable $\kappa$\textbf{-ended spanning tree} if there is a measurable subgraph $\mathcal{T}\subseteq \mathcal{G}$ which is acyclic, one-ended, and has the same connected components as $\mathcal{G}$ $\nu$-a.e.
\end{definition}

Notice that $d$-regular trees for $d\geq 2$ (the aforementioned examples, in particular) must have at least two ends.  On the other hand, a locally finite tree is one-ended if and only if its vertex set can be exhausted by iteratively removing leaves. This property enables measurable variants of many algorithms from finite combinatorics that do not hold for arbitrary measurable graphs. Indeed, while the above examples show that the measurable versions of Brooks' theorem and Hall's theorem fail in general, they are known to hold for measurable graphs that admit measurable one-ended spanning trees, as was shown by Conley, Marks, and Tucker-Drob \cite{brooks} and Bowen, Kun, and Sabok \cite{bowen.kun.sabok}, respectively.  In addition, measurable one-ended spanning trees have found several recent applications, including measurable solutions to Tarski’s circle squaring problem and the construction of measurable $k$-factors and integral flows \cite{bowen.kun.sabok,bowen2025uniform}, Borel balanced orientations \cite{bowen2022one}, and factors of Euclidean spaces and Poisson point processes \cite{timar2004tree,timar2021nonamenable,Timar_opt_tail}.

With these applications in mind, it is natural to ask when a locally finite measurable graph $\mathcal{G}$ admits a measurable one-ended spanning tree. Classically, Halin’s theorem \cite{halin1964unendliche} implies that an infinite locally finite graph admits a one-ended spanning tree if and only if the graph itself is one-ended.
In the measurable setting, the situation is more subtle. The existence of a measurable one-ended spanning tree of a measurable graph $\mathcal{G}$ on $(X,\nu)$ implies that $\mathcal{G}$ is ($\nu$-)hyperfinite, so at the very least hyperfiniteness is a necessary condition. If in addition $\mathcal{G}$ is probability measure preserving (pmp), a result of Adams \cite{adams} shows that any measurable spanning tree of a hyperfinite graph has at most two ends. This was later sharpened by Tim\'ar \cite{Timar_oneended} and by Conley, Gaboriau, Marks, and Tucker-Drob \cite{cgmtd}, who proved that under the same hypotheses, the tree can in fact be chosen to be one-ended whenever $\mathcal{G}$ is one-ended.

Outside the pmp setting, hyperfinite graphs need not have at most two ends anymore. Indeed, there exist hyperfinite Borel graphs with infinitely many ends (e.g.\ see \cite[Example 1.2]{CTT22}). 
We note that by \cite[Proposition 2.1]{Miller:thesis} any locally countable measurable graph on $(X,\nu)$ is measure class preserving (mcp) on a $\nu$-conull set. 
A natural extension of Adams’ result for such graphs was recently obtained in \cite{AnushRobin}, where the classical notion of ends is replaced by a weighted analogue, known as nonvanishing ends. 
To our knowledge, the only result for spanning trees in the mcp generality stated directly in terms of classical ends is due to Bowen, Poulin, and Zomback \cite{bowen2022one}, who showed that Schreier graphs of free Borel actions of one-ended amenable groups admit measurable one-ended spanning trees and asked if it is possible to do this in general for locally finite hyperfinite mcp graphs. 

Our main result shows that hyperfiniteness is, in fact, the only obstruction to finding measurable one-ended spanning trees, thereby giving a positive answer to \cite[Question 1]{bowen2022one}.

\begin{theorem}\label{measure_main}
    Let $\mathcal{G}$ be a locally finite, one-ended, mcp graph on a standard probability space $(X,\nu)$. Then $\mathcal{G}$ is hyperfinite if and only if it admits a measurable one-ended spanning tree a.e.
\end{theorem}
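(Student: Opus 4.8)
The forward implication is the necessary condition already recorded in the introduction: a one-ended tree is exhausted by iterated leaf-removal, and this exhaustion witnesses hyperfiniteness of $\mathcal{T}$, and hence of $\mathcal{G}$. I therefore focus on the converse: every locally finite, one-ended, mcp, hyperfinite $\mathcal{G}$ carries a measurable one-ended spanning tree a.e.

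The plan is to reduce to a single combinatorial criterion and then build the tree by a scale-by-scale exhaustion. Recall that a locally finite tree is one-ended exactly when it is exhausted by iterated leaf-removal, equivalently when every one of its edges separates it into a finite side and an infinite side. So it suffices to produce, measurably, a spanning subforest $\mathcal{T}\subseteq\mathcal{G}$ that is connected on $\nu$-a.e.\ $\mathcal{G}$-component and has the property that each $\mathcal{T}$-edge has a finite side; orienting every edge toward its infinite side then exhibits the unique end. Note that the naive ``every vertex has a unique parent'' condition is \emph{not} enough, since a two-ended line also admits such an orientation; the finite-side property is precisely what rules this out.

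To construct such a $\mathcal{T}$, I would first use hyperfiniteness to fix an increasing sequence $E_1\subseteq E_2\subseteq\cdots$ of finite Borel equivalence relations whose union is the $\mathcal{G}$-connectedness relation modulo a null set, and refine each $E_n$ to its $\mathcal{G}$-connected pieces, obtaining for a.e.\ $x$ a nested exhaustion $C_1(x)\subseteq C_2(x)\subseteq\cdots$ of $[x]_{\mathcal{G}}$ by finite connected tiles. Here one-endedness enters decisively: for every finite connected $F$ the complement $[x]_{\mathcal{G}}\setminus F$ has exactly one infinite component $\ext(F)$, so there is an unambiguous ``outward'' direction at every scale. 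Fixing a Borel linear order on $X$ to make all choices canonical, I would then build an increasing sequence of finite spanning trees $\mathcal{T}_n$ of the tiles, $\mathcal{T}_n\subseteq\mathcal{T}_{n+1}$, together with roots $r_n$ chosen, after passing to a subsequence of scales if necessary, so that $r_{n+1}\in\ext(C_n)$. Because the tiles exhaust the component and $\ext(C_n)$ is infinite, the roots $r_n$ escape every finite set; consequently the increasing union $\mathcal{T}=\bigcup_n\mathcal{T}_n$ is, on each component, a connected spanning tree in which the inward side of every fixed edge is trapped inside some tile and hence finite. This is exactly the finite-side criterion, so $\mathcal{T}$ is one-ended.

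The hard part, and the place where the mcp generality genuinely bites, is the global coherence of these per-scale choices. In the pmp setting the corresponding control over ends in the results of Adams and of Tim\'ar and Conley--Gaboriau--Marks--Tucker-Drob is obtained through the mass-transport principle; with no invariant measure available, that tool is gone, and the substance of the argument becomes a direct measurable verification that routing the roots into the unique infinite complement $\ext(\cdot)$ both keeps $\mathcal{T}$ connected on $\nu$-a.e.\ whole component and never leaves an edge with two infinite sides, the latter being precisely where uniqueness of $\ext(F)$ (i.e.\ one-endedness of $\mathcal{G}$) is indispensable. I expect this coherence-and-escape step, along with the measurable selection of roots and local spanning trees across infinitely many scales, to be the main obstacle; the finite bookkeeping inside a single tile and the reduction to the finite-side criterion should be comparatively routine.
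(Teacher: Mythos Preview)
Your plan has a genuine gap at exactly the point you flag as ``the hard part,'' and the difficulty is not merely one of measurable bookkeeping: the combinatorial mechanism you propose does not force one-endedness even on a single component. Requiring $r_{n+1}\in\ext(C_n)$ tells you that the roots escape, but it places no constraint on \emph{where} the vertices of $C_{n+1}\setminus C_n$ attach to $\mathcal{T}_n$ when you extend to $\mathcal{T}_{n+1}$. Concretely, take $G=\mathbb{Z}^2$ with tiles $C_n=[-n,n]^2$ and let $\mathcal{T}_n$ be the comb consisting of the segment $[-n,n]\times\{0\}$ with vertical teeth, rooted at $r_n=(n,0)$. The roots escape and $\mathcal{T}_n\subseteq\mathcal{T}_{n+1}$, yet $\bigcup_n\mathcal{T}_n$ contains the full horizontal axis and is two-ended; for $e=((0,0),(1,0))$ the ``inward'' side grows without bound and is not trapped in any tile. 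In general the boundary of $C_n$ meets both $\mathcal{T}_n$-sides of a fixed edge, and once that happens nothing in your scheme prevents both sides from acquiring infinitely many descendants. This obstruction is already present in the pmp case; it is what makes the problem nontrivial there too. (A smaller correction: mass transport is not ``gone'' in the mcp setting---there is a weighted MTP via the Radon--Nikodym cocycle, and the paper uses it.)

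The paper's route is entirely different and does not try to grow a tree scale by scale. It recasts the goal via Proposition~\ref{basic}: a measurable one-ended spanning tree exists iff $\mathcal{G}$ admits a nested sequence of \emph{substantial} subgraphs $\mathcal{G}_n$ with $\mu(\mathcal{G}_n)\to 0$. The problem then becomes showing that one can always pass to a substantial subgraph with at most a fixed fraction $C<1$ of the edge mass. This is done relative to a measurable spanning tree $\mathcal{T}$ in three moves: trim $\mathcal{G}$ down to mass close to $\mu(\mathcal{T})$ while staying one-ended (Proposition~\ref{tree measure}); delete most edges of $\mathcal{T}$ lying outside its \emph{core}, the set of tree edges contained in infinitely many fundamental cycles, via a kernel/boundary-connectivity argument (Proposition~\ref{boundary connectivity}); and when essentially all of $\mathcal{T}$ lies in the core, contract so that $\mathrm{Core}_{\mathcal{T}}(\mathcal{G})=\mathcal{T}$, deduce $3$-edge-connectivity, and invoke the Nash--Williams tree-packing theorem on finite pieces to find a spanning tree carrying at most $2/3$ of the weight (Proposition~\ref{sparse_substential}). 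The Nash--Williams step is the key new idea and has no analogue in your outline.
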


We highlight that Theorem~\ref{measure_main} enables many extensions of results that rely on \cite{Timar_oneended,cgmtd} to the mcp setting. 
Similarly to the recent rise of activity in the mcp setting in measured combinatorics and measured group theory \cite{Ts:hyperfinite_ergodic_subgraph,AnushRobin, bowen2022one, CTT22, poulin-anticost, OscillatingEnd}, there has been an increasing amount of interest in percolation beyond the unimodular setting \cite{Hutchcroft20,CTT22,HutchcroftPan24dim,wamen,HeavyClusterRepulsion}. In the latter setting, our main result gives the following generalization of \cite[Corollary 1.2]{Timar_oneended}.

\begin{cor}\label{perc_mian}
    For a connected, locally finite, one-ended graph $G$, let $\Gamma$ be a closed subgroup of $\Aut(G)$ that acts quasi-transitively on $G$. Then $\Gamma$ is amenable if and only if there is a $\Gamma$-invariant random spanning tree of $G$ with one end a.s. 
    Moreover, such a spanning tree can be constructed as a factor of i.i.d.
\end{cor}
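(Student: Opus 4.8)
The plan is to deduce Corollary~\ref{perc_mian} from Theorem~\ref{measure_main} through the standard dictionary between invariant percolation on $G$ and measurable combinatorics on an associated graph built from the Bernoulli action of $\Gamma$. First I would set up the measure space. Since $\Gamma \leq \Aut(G)$ is closed and acts quasi-transitively on the locally finite graph $G$, it is a locally compact second countable group with compact open vertex stabilizers. Labelling the vertices independently and uniformly, equip $\Omega = [0,1]^{V(G)}$ with the product measure, on which $\Gamma$ acts by shifting coordinates. Because the labels are atomless, for a.e.\ $\omega$ no nontrivial element of $\Gamma$ fixes $\omega$, so the action is essentially free; passing to the countable Borel graph $\mathcal{G}$ on $\Gamma$-orbits of rooted labellings (re-rooting along edges of $G$) then yields a locally finite measurable graph whose a.e.\ connected component is isomorphic to $G$. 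The correct base measure $\nu$ weights a re-rooting from $u$ to $v$ by the ratio of Haar volumes of the two stabilizers (the modular cocycle); with this choice $\mathcal{G}$ is measure-class-preserving, and it is measure-preserving exactly when $\Gamma$ is unimodular — precisely the regime in which the mcp hypothesis of Theorem~\ref{measure_main} is needed. Note that $\mathcal{G}$ is one-ended a.e.\ since $G$ is, and, being locally finite, it is automatically mcp by \cite[Proposition 2.1]{Miller:thesis}. Under this dictionary, factor-of-i.i.d.\ $\Gamma$-invariant random subgraphs of $G$ correspond to measurable subgraphs of $\mathcal{G}$, and $\nu$-hyperfiniteness of $\mathcal{G}$ is equivalent to hyperfiniteness of the Bernoulli orbit equivalence relation of $\Gamma$.

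For the forward implication, suppose $\Gamma$ is amenable. Then by the Connes--Feldman--Weiss theorem the orbit equivalence relation of the essentially free, mcp Bernoulli action is hyperfinite, so $\mathcal{G}$ is $\nu$-hyperfinite. As $\mathcal{G}$ is also one-ended and mcp, Theorem~\ref{measure_main} furnishes a measurable one-ended spanning tree of $\mathcal{G}$. Translating back through the dictionary, this measurable subgraph is exactly a $\Gamma$-invariant, factor-of-i.i.d.\ one-ended spanning tree of $G$, yielding both the existence and the ``moreover'' clauses simultaneously.

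For the converse, suppose $G$ carries a $\Gamma$-invariant random one-ended spanning tree with law $\mu$. The action $\Gamma \acts (\mathcal{T},\mu)$ on the space $\mathcal{T}$ of one-ended spanning trees is measure-preserving, and on it the tautological tree is a measurable one-ended spanning tree of the associated graph $\mathcal{G}_\mu$ (whose components are $\cong G$). As noted in the introduction, the existence of a measurable one-ended spanning tree forces $\mathcal{G}_\mu$ to be hyperfinite. Since the vertex stabilizers of $\Gamma$ are compact, hence amenable, hyperfiniteness of this orbit relation implies by the Zimmer-type converse to Connes--Feldman--Weiss that $\Gamma$ is amenable.

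The main obstacle I anticipate lies in the careful construction and verification of the dictionary recorded in the first paragraph, rather than in the application of Theorem~\ref{measure_main} itself. Three points require care: (i) extracting a genuinely locally finite Borel graph from the action of the non-discrete group $\Gamma$, which relies on essential freeness of the Bernoulli action; (ii) pinning down the modular-cocycle-weighted measure so that $\nu$-hyperfiniteness of $\mathcal{G}$ matches amenability of $\Gamma$ in the non-unimodular case and so that $\mathcal{G}$ is honestly mcp; and (iii) upgrading the measurable one-ended spanning tree produced by Theorem~\ref{measure_main} to a fully $\Gamma$-equivariant factor of i.i.d., which may require symmetrizing over the compact stabilizers or appealing to canonicity of the construction. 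The converse direction additionally needs the hyperfinite-implies-amenable step to tolerate invariant random trees whose $\Gamma$-action is not free, which is handled by the compactness, and hence amenability, of the stabilizers.
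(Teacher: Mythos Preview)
Your proposal is correct and follows essentially the paper's route: pass through the cluster-graphing dictionary (which the paper cites from \cite{Gaboriau05} and \cite[Section~5]{CTT22}), translate amenability of $\Gamma$ into hyperfiniteness of the resulting mcp graph via the packaged equivalences \cite[Theorem~3.9]{BLPS99inv} and \cite[Theorem~1.4, Lemma~4.2]{wamen} (which absorb exactly your worries (i) and (ii) in the locally compact nonunimodular setting), and apply Theorem~\ref{measure_main} in both directions. The one place the paper's argument diverges from yours is the factor-of-i.i.d.\ clause: rather than pulling the measurable tree back to a $\Gamma$-equivariant rule and symmetrizing over compact stabilizers as you suggest in (iii), the paper observes that \cite[Theorem~5.1]{BLPS99inv} already constructs the hyperfinite exhaustion as a factor of i.i.d., and since every subsequent step in the proof of Theorem~\ref{measure_main} is deterministic given that input, the output tree is automatically a factor of i.i.d.\ --- this sidesteps (iii) entirely.
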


We note we phrased Corollary~\ref{perc_mian} in terms of amenability of the group, while \cite[Corollary 1.2]{Timar_oneended} is stated in terms of amenability of the graph (i.e.\ the Cheeger constant is zero). For unimodular quasi-transitive graphs these notions of amenability are equivalent by the Soardi--Woess--Salvatori theorem \cite{Soardi-Woess,Salvatori}. One can also replace the assumption of amenability on $\Gamma$ by an equivalent hypothesis of weighted-amenability of $G$ with respect to $\Gamma$, which was introduced in \cite{BLPS99inv} and further considered in \cite{wamen}.

\subsection*{Organization}
In Section~\ref{basic_def} we recall a few basic definitions in both finite graph theory and measured combinatorics. Section~\ref{sec:proof} is dedicated to the proof of our main result, Theorem~\ref{measure_main}. It starts with an outline of the proof and is further divided in  Sections~\ref{sec:substantial}, ~\ref{sec:out of tree}, ~\ref{finite}, and ~\ref{infinite} covering the propositions that the proof relies on. Finally, Section~\ref{sec:perc} recalls relevant background in percolation theory and presents the proof of Corollary~\ref{perc_mian}.

\section{Basic definitions}\label{basic_def}




\subsection{Graph theory}

Let $G = (V(G),E(G))$ be a graph. If $U \subset V(G)$, we denote the \textbf{induced subgraph} of $G$ by $U$ as $G[U]$. Recall that this graph has vertex set $U$ and edge set $E(G[U]) = E(G) \cap (U\times U)$. The \textbf{outer vertex boundary} of such a $U\subset V(G)$ is the set $$
\partial_v^{\mathrm{out}} (U) := \{v \in V(G) \backslash U : \exists u \in U \ \text{such that}\ (u,v) \in E(G)\}.
$$
If $u,v\in V(G)$, we denote by $d_G (u,v)$ the graph distance between $u$ and $v$, i.e., the length of a shortest path with end-vertices $u$ and $v$.

We denote the degree of a vertex $v \in V(G)$ by $\deg_G (v)$. The \textbf{cycle space of $G$}, denoted as $\mathcal C (G)$, is the vector subspace of $\mathbb  Z_2 [E(G)]$ generated by the indicator functions of cycles. A \textbf{fundamental cycle in $G$ (with respect to $T$)} is a cycle in $G$ that contains exactly one edge in $ E(G) \backslash E(T)$. Note that for any spanning tree $T$ of $G$, the family of fundamental cycles with respect to $T$ is a basis of the cycle space $\mathcal C (G)$.

Recall that a finite graph is said to be \textbf{$k$-edge-connected} if the removal of any $k$ edges does not disconnect it. Menger's theorem \cite{Menger1927} states that a graph is $k$-edge-connected if and only if every pair of vertices admits $k$-many pairwise edge-disjoint paths between them.
The following theorem is a consequence of the Nash–Williams tree packing theorem \cite{nash1961edge,Tutte}.

\begin{theorem}[{\cite[Corollary 2.4.2]{diestel}}]\label{nash-williams}
    Every $2k$-edge-connected multi-graph admits $k$ edge-disjoint spanning trees.
\end{theorem}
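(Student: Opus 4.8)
The plan is to derive this from the Tutte--Nash-Williams tree packing theorem, whose partition form asserts that a multigraph $G$ admits $k$ edge-disjoint spanning trees if and only if, for every partition $P = \{V_1, \dots, V_r\}$ of $V(G)$ into nonempty parts, the number of \emph{cross-edges} (edges of $G$ with endpoints in two distinct parts) is at least $k(r-1)$. I would take this characterization as a black box, since it is precisely the content of \cite{nash1961edge,Tutte}, and the task then reduces to checking that $2k$-edge-connectivity forces the partition inequality.

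So fix a partition $P = \{V_1, \dots, V_r\}$. The case $r = 1$ is vacuous, as the right-hand side $k(r-1)$ is then $0$, so assume $r \geq 2$. In this case each $V_i$ is a nonempty proper subset of $V(G)$, so the edge cut separating $V_i$ from its complement is a nontrivial cut, and by $2k$-edge-connectivity it must contain at least $2k$ edges. Writing $e(V_i)$ for the number of edges of $G$ with exactly one endpoint in $V_i$, this gives $e(V_i) \geq 2k$ for every $i$.

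The key step is then a double-counting identity: a cross-edge has its two endpoints in distinct parts and is therefore counted in exactly two of the sets contributing to $\sum_{i=1}^r e(V_i)$, while an edge whose endpoints lie in a common part is counted zero times. Hence the number of cross-edges equals $\tfrac{1}{2}\sum_{i=1}^r e(V_i) \geq \tfrac{1}{2}\cdot 2kr = kr \geq k(r-1)$, which is exactly the hypothesis of the tree packing theorem. Applying that theorem produces the desired $k$ edge-disjoint spanning trees.

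As for where the difficulty lies, the genuinely substantial input is the tree packing theorem itself, which I am importing rather than proving; the reduction above is an elementary counting argument. The only points that demand care are the conventions for multigraphs, namely that edge cuts are counted with multiplicity and that $k$-edge-connected means every nontrivial cut has at least $k$ edges, and the minor observation that the slack in $kr \geq k(r-1)$ shows $2k$-edge-connectivity is in fact comfortably more than what the argument strictly needs.
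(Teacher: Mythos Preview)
Your proof is correct and matches the paper's approach exactly: the paper does not prove this theorem but simply states it as a consequence of the Nash--Williams/Tutte tree packing theorem (citing \cite{nash1961edge,Tutte} and \cite[Corollary~2.4.2]{diestel}), and your argument is precisely the standard partition-counting reduction that underlies that citation.
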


\subsection{Borel and measured combinatorics}\label{sec:prelim_measured}

Let $X$ denote a standard Borel space. A \textbf{countable Borel equivalence relation (cber)} on $X$ is a Borel subset $\mathcal{R} \subseteq X^2$ whose elements define an equivalence relation with countable equivalence classes on $X$. 
Similarly, a \textbf{finite Borel equivalence relation (fber)} is a cber whose components are finite. 
A \textbf{Borel graph}  on $X$ is a Borel subset $\mathcal{G} \subseteq X^2$ whose elements define the edges of a graph on the vertex set $X$.  A Borel graph $\mathcal{G}$ is \textbf{locally countable} (resp.\ \textbf{locally finite}) if every $x \in X$ has countably (resp.\ finitely) many $\mathcal{G}$-neighbors. 
If $\mathcal{G}$ is a locally countable Borel graph we use $\mathcal{R}_{\mathcal{G}}$ to denote its connectedness relation, that is the cber on $X$ defined by belonging to the same $\mathcal{G}$-connected component. 
For a cber $\mathcal{R}$, a Borel graph $\mathcal{G}$, and $x\in X$, we denote by $[x]_{\mathcal{R}}$ (resp.\ $[x]_{\mathcal{G}}$) the $\mathcal{R}$-equivalence class (resp.\ $\mathcal{G}$-connected component) that contains $x$. 
Given a Borel $A\subseteq X$ and a Borel graph $\mathcal{G}$ we denote the induced subgraph of $\mathcal{G}$ on $A$ by $\mathcal{G}[A]$.

A cber $\mathcal{R}$ on a standard probability space $(X,\nu)$ is said to be $\nu$\textbf{-probability measure preserving (pmp)} (resp.\ $\nu$\textbf{-measure class preserving (mcp)}) if every Borel $\mathcal{R}$-invariant automorphism $\gamma$ of $X$ preserves $\nu$ (resp.\ $\nu$-null sets).
A measurable graph $\mathcal{G}$ is said to be pmp (resp.\ mcp) whenever its connectedness relation $\mathcal{R}_{\mathcal{G}}$ is. The lack of invariance in the measure class preserving cber is quantified by a relative weight function (that is a cocycle) on $X$. First recall that a Borel map $\w \colon \mathcal{R} \to \R^+$ is a \textbf{Borel cocycle} on $\mathcal{R}$ if for every three $\mathcal R$-related $x,y,z \in X$ the following identity holds: 
$$
\w^x (y)\w^y (z)  = \w^x (z).
$$
Now, if $\mathcal{R}$ is an mcp cber on $(X,\nu)$ then there exists a Borel cocycle $\w_\nu \colon \mathcal{R}_{\mathcal{G}} \to \mathbb R^+$ satisfying the \textbf{(weighted) Mass Transport Principle (MTP)}: for every $f \colon \mathcal{R}_{\mathcal{G}}\to \mathbb R_{\geq 0}$, 
\begin{equation}\label{mtp}
\int_X \sum_{y\in[x]_{\mathcal{R}}} f(x,y) d\nu (x) = \int_X \sum_{x\in[y]_{\mathcal{R}}} f(x,y) \w_\nu^y (x) d\nu (y).
\end{equation}
In this context, the cocycle value $\w_\nu^y(x)$ can be interpreted as ``the weight of $x$ relative to $y$'' and it is referred as the \textbf{Radon--Nikodym cocycle} on \ $\mathcal{R}_{\mathcal{G}}$ (resp.\ $\mathcal{G}$) with respect to $\nu$. As mentioned above, by \cite[Proposition 2.1]{Miller:thesis}, every cber on $(X,\nu)$ is mcp on a $\nu$-conull set. Finally, note that a cber is pmp whenever $\w_\nu\equiv 1$. To simplify the notation we will omit the subscript $\nu$ whenever it is clear from the context.

If $\mathcal{G}$ is an mcp graph on $(X,\nu)$, the vertex measure $\nu$ extends to an \textbf{edge measure} on $X^2$, denoted by $\mu$ throughout this paper, which is defined by 
$$
\mu (A)=\mu_{\nu} (A) := \int_X \deg_{\mathcal{G}\cap A}(x) d\nu (x),
$$
for each Borel subset $A \subseteq X^2$ and where $\deg_{\mathcal{G}\cap A}(x)$ denotes the degree of $x$ in $\mathcal{G}\cap A$. In particular, the measured graph $\mathcal{G}$ is locally finite whenever $\deg_{\mathcal{G}} (x)<\infty$ for $\nu$-a.e.\ Under the assumption of local finiteness there exists an equivalent measure\footnote{Precisely, define $\nu' (A) := \int_A \deg_{\mathcal G} (x)^{-1}d\nu(x)$ for every Borel $A\subset X$.} $\nu' \sim \nu$ whose edge measure $\mu'$ is such that $\mu' (\mathcal G) < \infty$. Without loss of generality, we will assume throughout the paper that $\mu (\mathcal G)< \infty$.

 Note that by the MTP, as in \eqref{mtp}, we can rewrite $\mu(\mathcal{G})$ as follows
\begin{align}\label{weighted_deg}
        \mu (\mathcal{G}) &= \int_X \deg_{\mathcal{G}}(x) d\nu (x)= \int_X \sum_{y\in[x]_{\mathcal{G}}} \mathds{1}_{(x,y)\in \mathcal{G}} d\nu (x)\notag\\
                  \textnormal{[by MTP]}\quad &=\int_X \sum_{x\in[y]_{\mathcal{G}}}\mathds{1}_{(x,y)\in \mathcal{G}}\w^y(x) d\nu (y)=\int_X \sum_{x: (x,y)\in\mathcal{G}}\w^y(x) d\nu (y).
\end{align}

\section{Proof of Theorem~\ref{measure_main}}\label{sec:proof}

Throughout this section $\mathcal{G}$ denotes a locally finite mcp graph on a standard probability space $(X,\nu)$ unless otherwise stated. As pointed out in the preliminaries, we may assume without loss of generality that $\mu (\mathcal G) < \infty$, where $\mu$ denotes the edge measure associated to $\nu$.

We proceed to sketch an outline of the proof. Our basic strategy towards constructing a measurable one-ended spanning tree is obtaining a nested sequence of \emph{substantial} measurable subgraphs $(\mathcal G_n)_{n\in \mathbb N}$ of $\mathcal{G}$ such that $\mu(\mathcal{G}_n)\to0$. For a precise statement of this approach, see Proposition~\ref{basic} below the following definition of substantial.

\begin{definition}
    Let $\mathcal{G}$ be an mcp graph on a standard probability space $(X,\nu)$. A measurable subgraph $\mathcal{G}'\subseteq \mathcal{G}$ is \textbf{substantial} if a.e.\ $\mathcal{G}'$-component is either one-ended or is an isolated vertex and a.e.\ one-ended $\mathcal{G}$-component contains exactly one one-ended $\mathcal{G}'$-component. Given such as subgraph, we let $O (\mathcal{G}')$ denote the measurable set of vertices in one-ended components and $I(\mathcal{G}')$ be the union of isolated vertices.
\end{definition}

\begin{example}\label{ex:substantial}
    Let $\mathcal T$ be an acyclic one-ended Borel graph on a standard Borel space $X$. For each $n\in \mathbb N$, the set $L^{>n}\subset X$ consisting of those $x\in X$ at distance at least $n$ from a leaf, i.e.\ a degree one vertex, is Borel. The induced subgraph $\mathcal T [L^{>n}]$ is substantial. Each $\mathcal T$-connected component consists of a unique $\mathcal T [L^{>n}]$-connected component together with a union of finite trees, each attached by a single edge.
\end{example}

The above example motivates the following proposition.

\begin{prop}\label{basic}
    Let $\mathcal{G}$ be a one-ended, locally finite, hyperfinite, mcp graph with $\mu(\mathcal{G})$ finite. Then $\mathcal{G}$ has a measurable one-ended spanning tree if and only if 
    there exists a nested sequence of substantial measurable subgraphs $(\mathcal G_n)_{n\in \mathbb N}$ of $\mathcal{G}$ such that $\mu(\mathcal{G}_n)\to0$ as $n\to\infty$.
\end{prop}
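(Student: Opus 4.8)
The statement is a biconditional, and I would prove the two implications separately, treating ``tree $\Rightarrow$ sequence'' as routine and ``sequence $\Rightarrow$ tree'' as the substantive direction. For the forward implication, suppose $\mathcal{T}\subseteq\mathcal{G}$ is a measurable one-ended spanning tree. The plan is to invoke Example~\ref{ex:substantial}: let $L^{>n}\subseteq X$ be the Borel set from that example (the vertices of $\mathcal{T}$ surviving $n$ rounds of iterated leaf-removal) and put $\mathcal{G}_n:=\mathcal{T}[L^{>n}]$. By the example each $\mathcal{G}_n$ is substantial, and $L^{>n+1}\subseteq L^{>n}$ makes the sequence nested. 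Since $\mathcal{T}$ is one-ended a.e., leaf-removal exhausts each component, so $\bigcap_n L^{>n}$ is $\nu$-null; hence for a.e.\ $x$ one has $\deg_{\mathcal{G}_n}(x)=0$ for all large $n$, so $\deg_{\mathcal{G}_n}\to 0$ pointwise a.e. As $\deg_{\mathcal{G}_n}\le\deg_{\mathcal{G}}$ and $\int\deg_{\mathcal{G}}\,d\nu=\mu(\mathcal{G})<\infty$, dominated convergence gives $\mu(\mathcal{G}_n)=\int\deg_{\mathcal{G}_n}\,d\nu\to 0$.

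For the reverse implication I would first record the structure forced by a nested substantial sequence $(\mathcal{G}_n)$ with $\mu(\mathcal{G}_n)\to 0$. The one-ended cores are nested, $O(\mathcal{G}_{n+1})\subseteq O(\mathcal{G}_n)$, because any one-ended $\mathcal{G}_{n+1}$-component lies in a single $\mathcal{G}_n$-component, which by substantiality must be the one-ended core. Every $x\in O(\mathcal{G}_n)$ has $\mathcal{G}_n$-degree at least $1$, so $\nu(O(\mathcal{G}_n))\le\mu(\mathcal{G}_n)\to 0$ and $\bigcap_n O(\mathcal{G}_n)$ is null; thus a.e.\ $x$ has a finite peeling level $N(x):=\min\{n:x\in I(\mathcal{G}_n)\}$. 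My target is a Borel parent map $p\colon X\to X$ with $p(x)$ a $\mathcal{G}$-neighbour of $x$, whose functional graph is acyclic, connected on each $\mathcal{G}$-component, and has all descendant sets finite: such a map is precisely a measurable one-ended spanning tree, since acyclicity and connectivity give a spanning tree while finite descendant sets preclude a second end.

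The construction proceeds from the outside in. I would route each shell $A_n:=O(\mathcal{G}_{n-1})\setminus O(\mathcal{G}_n)$ of newly-peeled vertices into the deeper core $O(\mathcal{G}_n)$ along finite subtrees, exploiting that in $\mathcal{G}_{n-1}$ the set $O(\mathcal{G}_{n-1})=A_n\sqcup O(\mathcal{G}_n)$ is one-ended, hence connected on each component, and using the finite pieces supplied by the hyperfiniteness of $\mathcal{G}$ (an increasing exhaustion by finite Borel equivalence relations). Because the cores shrink to a null set, following $p$ is non-decreasing in $N$ and crosses into strictly deeper cores infinitely often, which is what should force acyclicity and one-endedness, provided every vertex accrues only finitely many descendants.

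The hard part will be carrying out this gluing measurably while controlling two coupled difficulties. First, the $\mathcal{G}_n$ are not themselves acyclic, so I cannot simply take their components; I must measurably and consistently select spanning trees of the cores and of the shell-to-core connections across all levels simultaneously. Second, a priori the $\mathcal{G}_{n-1}$-components of a shell $A_n$, or the descendant sets accumulating at a single core vertex, could be infinite, which would create additional ends. Guaranteeing that all these pieces, and hence all descendant sets, are finite, so that the limiting tree has exactly one end rather than two or infinitely many, is the crux of the argument, and I expect it to rely on the auxiliary propositions developed in the subsequent sections (the separate treatment of the finite and infinite parts and the ``out of tree'' routing).
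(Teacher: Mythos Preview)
Your forward direction matches the paper exactly (Example~\ref{ex:substantial}), with the dominated-convergence step spelled out where the paper leaves it implicit. Your reverse-direction strategy is also the paper's: peel off shells $A_n=O(\mathcal{G}_{n-1})\setminus O(\mathcal{G}_n)$, route each into the deeper core $O(\mathcal{G}_n)$ by a forest of finite trees, and take the union. You correctly identify the crux, namely that in the mcp setting the pieces connecting isolated vertices to $O(\mathcal{G}_n)$ need not be finite a priori, which would spawn extra ends.

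The gap is your final sentence: you expect this finiteness to be secured by ``the auxiliary propositions developed in the subsequent sections.'' That misreads the paper's architecture. Sections~\ref{sec:out of tree}--\ref{infinite} (Propositions~\ref{tree measure}, \ref{boundary connectivity}, \ref{sparse_substential}) are used to \emph{construct} the substantial sequence $(\mathcal{G}_n)$---i.e.\ to verify the hypothesis of the direction you are proving---and play no role once the sequence is given. The actual tool is Lemma~\ref{lem:technical}, proved immediately before Proposition~\ref{basic} in the same subsection. It produces, for any substantial $\mathcal{G}'$ and any prescribed fber on $I(\mathcal{G}')$, a Borel forest with finite components, each touching $O(\mathcal{G}')$ in exactly one vertex and absorbing a $(1-\delta)$-fraction of the fber. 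Finiteness is obtained by a Voronoi tessellation of the possibly-infinite isolated region with centers on its boundary, followed by an iterative rearrangement (Claim~\ref{claim:partition}) that pushes infinite cells into small-measure sacrificial cells, with Borel--Cantelli ensuring a.e.\ stabilization. The proof of Proposition~\ref{basic} then applies the lemma at stage $n$ with $\delta=2^{-n}$ and $\mathcal{F}$ coming from a hyperfinite exhaustion $(\mathcal{E}_n)$, so that a second application of Borel--Cantelli guarantees the union of forests is spanning. Your concern about selecting spanning trees ``of the cores'' is unnecessary: the cores are never spanned; they shrink to a null set and every vertex is eventually routed as part of a shell.
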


The proof of Proposition \ref{basic} can be found in Section \ref{sec:substantial}. In order to prove Theorem \ref{measure_main}, we are only left with the task of finding some $\eps$ such that, whenever we are given a substantial subgraph $\mathcal G' \subset \mathcal G$ we can find a further substantial subgraph $\mathcal G'' \subset \mathcal G'$ such that $\mu (\mathcal G'') \leq (1 - \eps) \mu (\mathcal G')$. Roughly speaking, we must ensure that we can always delete ``sufficiently many edges'' from a locally finite one-ended hyperfinite mcp graph while preserving the end structure. 

By hyperfiniteness, the Borel graph $\mathcal G$ admits a measurable spanning tree $\mathcal T \subset \mathcal G$. Our first edge-deletion result is the following and is proven in Section \ref{sec:out of tree}. 

\begin{prop}\label{tree measure}
    Let $\mathcal G$ be a one-ended, locally finite, hyperfinite, mcp graph with measurable spanning tree $\mathcal T$. Then, for every $\eps > 0$, there exists $\mathcal G' \subseteq \mathcal G$ one-ended with $\mathcal R_{\mathcal G'} = \mathcal R_{\mathcal G}$, such that $\mathcal T \subset \mathcal G'$, and $
    \mu( \mathcal G') \leq \mu (\mathcal T) + \eps$.
\end{prop}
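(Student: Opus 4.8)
The plan is to start from the spanning tree $\mathcal T$ and adjoin a set of non-tree edges of total edge measure at most $\eps$ in order to merge the (possibly many) ends of $\mathcal T$ into a single end per component, all the while keeping $\mathcal T \subseteq \mathcal G' \subseteq \mathcal G$, so that $\mathcal R_{\mathcal G'} = \mathcal R_{\mathcal G}$ holds automatically. First I would reduce one-endedness to a statement about one fixed exhaustion. Fix an increasing sequence $\mathcal S_n \nearrow \mathcal R_{\mathcal G}$ of finite Borel equivalence relations witnessing hyperfiniteness, and refine it so that every class is $\mathcal T$-connected (hence $\mathcal G$-connected): within each $\mathcal S_n$-class split into $\mathcal T$-connected components to obtain $\mathcal R_n$. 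This keeps the relations increasing and the classes finite, and $\bigcup_n \mathcal R_n = \mathcal R_{\mathcal G}$ because the two endpoints of any finite $\mathcal T$-path eventually lie in a common $\mathcal S_n$-class. Writing $C_n(x) := [x]_{\mathcal R_n}$, the sets $C_n(x)$ exhaust $[x]_{\mathcal G}$ by finite connected sets; since the number of infinite components of $\mathcal H - W$ is non-decreasing along any exhaustion $W$, a subgraph $\mathcal H \supseteq \mathcal T$ is one-ended on $[x]_{\mathcal G}$ iff $\mathcal H - C_n(x)$ has exactly one infinite component for every $n$, and it suffices to check this along any cofinal subsequence of levels.

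The key structural observation is that one can merge the infinite branches of $\mathcal T$ at a given level simply by throwing in all inter-class edges at that level. Let $\partial_n \subseteq \mathcal G$ denote the set of edges joining two distinct $\mathcal R_n$-classes. Because classes are $\mathcal T$-connected, $\mathcal T$ already connects every class internally, while every inter-class edge of $\mathcal G$ lies in $\partial_n$; contracting each class to a point therefore produces the same quotient for $\mathcal G$ and for $\mathcal T \cup \partial_n$. Consequently $\mathcal T \cup \partial_n$ has exactly the same connected components as $\mathcal G$, and this remains true in restriction to any union of classes. Now fix a level $n$ and a class $C = C_n(x)$. As $\mathcal G$ is one-ended, $\mathcal G - C$ has a unique infinite component $U$, which is a union of whole classes; by the previous sentence $(\mathcal T \cup \partial_n)[U]$ is connected, so every infinite branch of $\mathcal T - C$ (each lying in $U$) is reconnected and $(\mathcal T \cup \partial_n) - C$ has $U$ as its unique infinite component.

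Finally I would handle the measure budget, which is the one genuinely quantitative point. Since every edge of $\mathcal G$ joins two $\mathcal R_{\mathcal G}$-related vertices and $\mathcal R_n \nearrow \mathcal R_{\mathcal G}$, we have $\mathds 1_{\partial_n} \to 0$ $\mu$-a.e.\ on $\mathcal G$; as $\mu(\mathcal G) < \infty$, dominated convergence yields $\mu(\partial_n) \to 0$. I would then select a sparse subsequence $n_1 < n_2 < \cdots$ with $\sum_j \mu(\partial_{n_j}) < \eps$ and set $\mathcal G' := \mathcal T \cup \bigcup_j \partial_{n_j}$. Then $\mu(\mathcal G') \leq \mu(\mathcal T) + \sum_j \mu(\partial_{n_j}) < \mu(\mathcal T) + \eps$, while for each $j$ and each class $C$ at level $n_j$ the second paragraph (applied with $\partial_{n_j} \subseteq \mathcal G'$) shows that $\mathcal G' - C$ has a unique infinite component. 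As $\{C_{n_j}(x)\}_j$ is a cofinal exhaustion, $\mathcal G'$ is one-ended a.e.

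The main obstacle is exactly this simultaneous control of connectivity and measure, and the proposal resolves it through two facts: adding \emph{all} inter-class edges at a single level restores the full connectivity of $\mathcal G$ there (so a single infinite component survives removal of any class), and the edge measure of inter-class edges vanishes as $n \to \infty$; reconnecting only along a sufficiently sparse subsequence of levels therefore costs arbitrarily little. The points most in need of care are the existence of the $\mathcal T$-connected refinement and the passage from ``one infinite component at every level of a cofinal subsequence'' to genuine one-endedness, both of which are handled above.
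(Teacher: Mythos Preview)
Your proof is correct and follows the paper's approach: refine a hyperfiniteness witness so that classes are $\mathcal T$-connected, take $\mathcal G'$ to be $\mathcal T$ together with the inter-class edges $\partial_n$, and use that $\mu(\partial_n)\to 0$. The paper picks a \emph{single} level $n$ and invokes Proposition~\ref{prop:stays.one.ended} for one-endedness, whereas you add $\partial_{n_j}$ along a sparse subsequence and argue directly; in fact your quotient argument from the second paragraph already shows that $\mathcal T\cup\partial_n$ is one-ended for any fixed $n$ (apply it with $C=C_m(x)$ for each $m\geq n$, noting that such $C$ is still a union of $\mathcal R_n$-classes), so the subsequence and the summation $\sum_j\mu(\partial_{n_j})<\eps$ are unnecessary.
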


The above proposition implies that, without loss of generality, we may assume that a measurable spanning tree of $\mathcal G$ constitutes most of the edge-mass of the graph. Therefore, we focus our attention now in deleting edges from measurable spanning trees $\mathcal T \subset \mathcal G$. 

The analysis of this scenario depends on whether or not edges belong to the core of $\mathcal T$, defined as follows. 

\begin{definition}\label{def:core}
 Let $\mathcal G$ be a Borel graph and $\mathcal T$ a Borel spanning tree. Then, the \textbf{core} of $\mathcal T$ (relative to $\mathcal G$) is defined as the Borel subset
    $$
    \mathrm{Core}_\mathcal{T}(\mathcal{G})=\{e\in \mathcal{T}: e \textnormal{ is infinitely many fundamental cycles relative to } \mathcal{T}\}.
    $$
\end{definition}

In Proposition \ref{fundamental}, we observe that the tree $\mathcal T$ is one-ended precisely when the core is empty. Perhaps it is then counterintuitive at first that our next step is to ensure that the whole tree belongs to the core. As we will discuss in the next paragraphs, this ensures strong connectivity properties which allow us to progress. In Section~\ref{finite} we prove the following. 

\begin{prop}\label{boundary connectivity}
    Let $\mathcal{G}$ be a one-ended, locally finite, hyperfinite, mcp graph with measurable spanning tree $\mathcal{T}$ such that $\mu (\mathcal T) < \infty$. Then, for every $\varepsilon>0$, there is a Borel subset $\mathcal{S}\subseteq \mathcal{T}\setminus \mathrm{Core}_\mathcal{T} (\mathcal{G})$ with $\mu(\mathcal S)>(1-\varepsilon)\mu(\mathcal{T}\setminus \mathrm{Core}_\mathcal{T} (\mathcal{G}))$ such that $\mathcal{G}\setminus \mathcal{S}$ is substantial.
\end{prop}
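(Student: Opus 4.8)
The starting point is a structural description of the non-core edges. I claim that every $e\in\mathcal{T}\setminus\mathrm{Core}_\mathcal{T}(\mathcal{G})$ has a \emph{finite side}: by definition only finitely many fundamental cycles pass through $e$, hence only finitely many edges of $\mathcal{G}$ cross the cut of $\mathcal{T}$ obtained by deleting $e$. This is a finite edge-cut of the one-ended graph $\mathcal{G}$, and a one-ended graph cannot be split into two infinite pieces by a finite cut; so one of the two components of $\mathcal{T}\setminus\{e\}$ is finite. Applying the same argument to any edge lying on the finite side shows that such an edge is again non-core, so $\mathcal{T}\setminus\mathrm{Core}_\mathcal{T}(\mathcal{G})$ organizes $\mathcal{T}$ into finite ``bushes'' dangling from an infinite part that carries $\mathrm{Core}_\mathcal{T}(\mathcal{G})$ and the unique end. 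The decisive consequence is that deleting any Borel set $\mathcal{S}\subseteq\mathcal{T}\setminus\mathrm{Core}_\mathcal{T}(\mathcal{G})$ can only bud off finite pieces, since each removed edge detaches its finite side.

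By the previous paragraph, to make $\mathcal{G}\setminus\mathcal{S}$ substantial it suffices to delete almost all of $\mathcal{T}\setminus\mathrm{Core}_\mathcal{T}(\mathcal{G})$ while retaining, in each $\mathcal{G}$-component, a single infinite one-ended component and leaving every other piece finite. One cannot simply take $\mathcal{S}=\mathcal{T}\setminus\mathrm{Core}_\mathcal{T}(\mathcal{G})$: when $\mathrm{Core}_\mathcal{T}(\mathcal{G})$ is empty (equivalently, by Proposition~\ref{fundamental}, when $\mathcal{T}$ is already one-ended) deleting all non-core edges would leave no infinite component at all, and in general deleting an infinite separating set could break the end into two infinite components. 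The remedy is to retain a thin one-ended skeleton. Concretely, I would fix an exhaustion $\mathcal{F}_0\subseteq\mathcal{F}_1\subseteq\cdots$ of $\mathcal{R}_{\mathcal{G}}$ by finite Borel equivalence relations (hyperfiniteness) and, for a depth parameter $n$, delete every non-core tree edge except those lying ``deep'' in the infinite part, i.e.\ on no finite side of size $\le n$ and not separated from the end by an $\mathcal{F}_n$-boundary. This is a Borel choice, it keeps a connected one-ended infinite component threaded by the surviving core and skeleton, and it detaches only finite pieces.

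It remains to show that the retained non-core edges carry at most an $\varepsilon$-fraction of $\mu(\mathcal{T}\setminus\mathrm{Core}_\mathcal{T}(\mathcal{G}))$, which I would obtain by sending $n\to\infty$. The retained non-core edges are supported on the vertices that the iterated peeling of the finite bushes has not yet separated off by stage $n$ and that do not lie on $\mathrm{Core}_\mathcal{T}(\mathcal{G})$; since each non-core edge has a finite side, every such vertex lies at finite distance from the boundary of a finite bush (a leaf of a bush), so this supporting set decreases to a $\nu$-null set. As $\mu(\mathcal{T})<\infty$, dominated convergence, together with the Mass Transport Principle~\eqref{mtp} used to pass between the vertex measure $\nu$ and the edge measure $\mu$, forces the $\mu$-mass of the retained non-core edges to $0$. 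Choosing $n$ large enough that this mass drops below $\varepsilon\,\mu(\mathcal{T}\setminus\mathrm{Core}_\mathcal{T}(\mathcal{G}))$ and letting $\mathcal{S}$ be the complementary set of deleted non-core edges yields the proposition.

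The main obstacle is precisely the tension between the last two paragraphs: we must delete enough non-core edges that the retained mass is an arbitrarily small fraction of the total, yet keep enough that exactly one infinite, one-ended component survives in each $\mathcal{G}$-component and that no detached piece is infinite. The structural lemma of the first paragraph is what makes this possible, guaranteeing that deletion only ever removes finite pieces and never manufactures a new end; hyperfiniteness then supplies the Borel exhaustion that both makes the peeling measurable and pushes the retained skeleton out to infinity, and the Mass Transport Principle converts the resulting null deep remainder into the quantitative $(1-\varepsilon)$ estimate. Verifying measurability of the depth-$n$ choice, and checking that the surviving infinite component is genuinely unique and one-ended — not split by the possibly infinite deletion set and not re-glued into an infinite component by the retained non-tree edges — are the remaining technical points.
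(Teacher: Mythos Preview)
Your opening structural observation --- that each $e \in \mathcal{T}\setminus\mathrm{Core}_\mathcal{T}(\mathcal{G})$ has a finite side in the tree-cut --- is correct, and is essentially what underlies Lemma~\ref{nocore} and Proposition~\ref{fundamental}. But the ``decisive consequence'' you draw, that deleting any $\mathcal{S}\subseteq\mathcal{T}\setminus\mathrm{Core}_\mathcal{T}(\mathcal{G})$ can only bud off finite pieces, is false: non-tree edges of $\mathcal{G}$ may string the detached bushes back together into a second infinite component. Take a one-sided ladder on $\mathbb{N}\times\{0,1\}$ and let $\mathcal{T}$ be the caterpillar consisting of the bottom ray plus all rungs $(i,0)\sim(i,1)$. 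Then $\mathcal{T}$ is one-ended, so $\mathrm{Core}_\mathcal{T}(\mathcal{G})=\emptyset$, and every rung has finite side a single vertex; deleting all rungs leaves two disjoint infinite rays. Your depth-$n$ construction is not specified precisely enough to test --- under one reading it deletes every rung immediately, under another it deletes almost none --- but in either case the finite-side bookkeeping never sees the non-tree edges $(i,1)\sim(i+1,1)$ responsible for the second ray. The issues you defer as ``remaining technical points'' (uniqueness and one-endedness of the surviving infinite component, and ruling out re-gluing by non-tree edges) are therefore not technicalities but the entire content of the proposition, and the finite-side heuristic does not address them.

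The paper's argument is organised around a different invariant: not the finite tree-side of $e$, but the finite set $e^\ast$ of fundamental cycles through $e$. By hyperfiniteness one fixes an fber $\mathcal{E}$ with $\mathcal{G}$-connected classes large enough that a $(1-\varepsilon)$-fraction of non-core tree edges have all of $e^\ast$ contained in their $\mathcal{E}$-class; these edges --- the \emph{kernel} of the class --- form $\mathcal{S}$. Lemma~\ref{lem:exterior.conn}, a short cycle-space argument, then shows that after removing $\mathcal{S}$ the vertices of each $\mathcal{E}$-class reachable from infinity remain connected: any path through a deleted edge can be rerouted along one of its fundamental cycles, which by the kernel condition lies entirely in the class. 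Proposition~\ref{prop:stays.one.ended} upgrades this local connectivity to one-endedness of $\mathcal{G}\setminus\mathcal{S}$. In the ladder, a rung belongs to the kernel of its class $F$ only when both neighbouring non-tree edges lie inside $F$; the rungs at the boundary of each class are thus retained automatically, and precisely they keep the two rows glued. It is the kernel condition, not the size of the finite side, that couples the tree combinatorics to the non-tree edges.
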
 

We can then assume moreover that most of the mass of a measurable spanning tree belongs to the core. In Lemma~\ref{nocore} we show that $\mathcal{T}\setminus \mathrm{Core}_\mathcal{T(G)}$ induces a subgraph of $\mathcal T$ with finite connected components. Therefore, upon contracting these, we may assume, up to a small error, that the whole tree belongs to the core. This ensures sufficient connectivity to apply the tree packing theorem of Nash-Williams, as stated in Theorem~\ref{nash-williams}. In Section~\ref{infinite} we will prove the following. 

\begin{prop}\label{sparse_substential}
    Let $\mathcal{G}$ be a locally finite, hyperfinite, one-ended mcp graph on a finite measure space $(X,\nu)$ with $\mu(\mathcal{G})<\infty$. Assume $\mathcal{T}\subseteq\mathcal{G}$ is a measurable spanning tree of $\mathcal{G}$ such that $\mathrm{Core}_\mathcal{T}(G)=\mathcal{T}$. 
    Then, for every $\eps >0$, there is a substantial Borel subgraph $\mathcal{G}'\subseteq \mathcal{G}$ with $\mu(\mathcal{G}')< \mu(\mathcal{G})-\mu(\mathcal{T})/3 + \eps.$ 
\end{prop}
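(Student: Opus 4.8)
The plan is to exploit the hypothesis $\mathrm{Core}_\mathcal{T}(\mathcal{G}) = \mathcal{T}$, which by Definition~\ref{def:core} means every edge of $\mathcal{T}$ lies in infinitely many fundamental cycles. This should translate into a strong edge-connectivity property of $\mathcal{G}$ relative to $\mathcal{T}$: informally, the non-tree edges of $\mathcal{G}$ provide infinitely many ``reroutings'' around every tree edge, which I expect to upgrade to genuine high edge-connectivity on large finite pieces. Concretely, I would first use hyperfiniteness to write $\mathcal{R}_\mathcal{G}$ as an increasing union of finite Borel equivalence relations $(\mathcal{E}_n)$, and look at the finite induced subgraphs $\mathcal{G}[C]$ on the classes $C$ of $\mathcal{E}_n$. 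The goal is to show that, after passing to a sufficiently fine $\mathcal{E}_n$ and possibly discarding a set of small $\mu$-measure near the boundaries of the classes, these finite pieces are (say) $6$-edge-connected, or can be made so by a bounded modification, so that Theorem~\ref{nash-williams} applies.

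The heart of the argument is the following: on each finite class $C$, apply the Nash--Williams tree packing theorem (Theorem~\ref{nash-williams}) to extract three edge-disjoint spanning trees $T^1_C, T^2_C, T^3_C$ of $\mathcal{G}[C]$ (requiring $6$-edge-connectivity). Choosing the sparsest of the three, $T^i_C$ with minimal edge-mass, gives a spanning forest using at most $\tfrac{1}{3}$ of the mass that the three trees collectively occupy. The key mass accounting is that the union of the three edge-disjoint trees has $\mu$-mass at least $\mu(\mathcal{T})$ on the relevant region --- since $\mathcal{T}$ restricted to $C$ is itself one spanning tree and the three packed trees are disjoint --- so the minimum-mass one has $\mu$-mass at most roughly $\tfrac{1}{3}\mu(\mathcal{T})$ plus boundary error. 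I would then assemble the chosen trees $T^i_C$ across classes into a global Borel subgraph, handle the edges straddling class boundaries (whose total mass is controlled by choosing $n$ large, using local finiteness, finiteness of $\mu(\mathcal{G})$, and the MTP~\eqref{mtp} to bound boundary mass), and verify that the result is substantial: each component should be one-ended (or an isolated vertex), with each one-ended $\mathcal{G}$-component containing exactly one one-ended $\mathcal{G}'$-component.

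The main obstacle I anticipate is ensuring the high edge-connectivity needed for Nash--Williams holds on the finite pieces, measurably and a.e. The condition $\mathrm{Core}_\mathcal{T}(\mathcal{G}) = \mathcal{T}$ gives connectivity ``at infinity'' (infinitely many fundamental cycles through each edge), but Theorem~\ref{nash-williams} needs uniform finite edge-connectivity on bounded regions, and there is genuine tension between these: a finite induced subgraph $\mathcal{G}[C]$ may fail to be highly edge-connected near $\partial C$ even if every edge is globally in many cycles, because the relevant rerouting paths may exit $C$. Overcoming this is where the cocycle and the MTP must do real work: I would need to choose the equivalence classes $C$ so that low-connectivity ``bottleneck'' edges occur only near boundaries, then argue that the $\mu$-mass carried by such bottlenecks is small. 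I expect one must pass to a refinement where most of $\mathcal{T}$'s mass sits deep inside classes, then either locally repair connectivity by adding back a few non-tree edges (increasing $\mu(\mathcal{G}')$ only slightly, absorbed into the $+\eps$ slack) or restrict the tree-packing to a highly connected ``interior'' subgraph. Making these boundary estimates precise while keeping the construction Borel, and reconciling the mass bound $\mu(\mathcal{G}') < \mu(\mathcal{G}) - \mu(\mathcal{T})/3 + \eps$ with the fact that only the packed trees' mass (not all of $\mathcal{G}$) is being reduced, is the delicate point; I would organize it so that the $-\mu(\mathcal{T})/3$ savings come from replacing a full $\mathcal{T}$-worth of edges inside the interiors by the sparsest of three disjoint spanning trees there.
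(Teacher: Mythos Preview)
Your overall skeleton---hyperfinite exhaustion, Nash--Williams on finite pieces, keep the sparsest packed tree, reassemble and check substantiality via Proposition~\ref{prop:stays.one.ended}---is exactly the paper's strategy. But two genuine ideas are missing, and your mass accounting is garbled.

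First, the edge-connectivity obstacle you flag is real, and the hypothesis $\mathrm{Core}_\mathcal{T}(\mathcal{G})=\mathcal{T}$ does \emph{not} by itself yield even $3$-edge-connectivity, let alone $6$: degree-$2$ vertices in $\mathcal{G}$ produce $2$-edge cuts while their incident tree edges can still lie in infinitely many fundamental cycles. The paper does not try to get $6$-edge-connectivity directly. Instead it performs two preliminary reductions---removing \emph{redundant} non-tree edges (those joining two vertices on the same maximal $\mathcal{T}$-degree-$2$ segment) and contracting maximal $\mathcal{T}$-paths of degree-$2$ vertices not meeting $\mathcal{G}\setminus\mathcal{T}$---after which every vertex has $\mathcal{T}$-degree $\geq 3$ or has $\mathcal{T}$-degree $2$ with a non-tree edge whose fundamental cycle meets a degree-$\geq 3$ vertex. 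Only then does a short argument (Lemma~\ref{3con}) give $3$-edge-connectivity. This reduction step, together with a preliminary case split on whether $\mathcal{T}$ admits a Borel selection of two ends (handled separately by Proposition~\ref{2 ends}), is the content you are missing.

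Second, the passage from $3$- to $6$-edge-connectivity is not by ``choosing classes so that bottlenecks are near boundaries,'' as you suggest, but by a simple trick: \emph{duplicate every edge} to get a $6$-edge-connected multigraph, then apply Theorem~\ref{nash-williams}. This also fixes your accounting. With duplication, each original edge appears at most twice across the three edge-disjoint spanning trees, so their total $\w$-weight is at most $2\mu(\mathcal{G})$ on the piece; hence the sparsest tree has weight $\leq \tfrac{2}{3}\mu(\mathcal{G})$ there, giving a saving of at least $\tfrac{1}{3}\mu(\mathcal{G})\geq \tfrac{1}{3}\mu(\mathcal{T})$ up to the $\eps$ boundary error. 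Your claim that the minimum-mass tree has mass $\leq \tfrac{1}{3}\mu(\mathcal{T})$ does not follow from the inequality you wrote (that the three trees together have mass $\geq \mu(\mathcal{T})$)---the inequality points the wrong way.
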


While the following subsections are devoted to proving the above statements, we now present the proof of our main result from them. 

\begin{proof}[Proof of Theorem \ref{measure_main}]

    Let $\mathcal T$ be a measurable spanning tree for $\mathcal G$. By Proposition~\ref{basic} it is enough to construct a nested sequence of substantial subgraphs with vanishing edge-measure. We do this showing that there exists $0 < C<1$ such that every locally finite, hyperfinite, one-ended mcp graph $\mathcal G$ with $\mu (\mathcal G) < \infty$ admits a substantial subgraph $\mathcal G'\subset \mathcal G$ with $\mu (\mathcal G') \leq C \mu (\mathcal G)$.

    Suppose $\eps >0$ to be chosen later. 
    If $\mu (\mathcal T) < (1 - \eps) \mu(\mathcal G)$, then by Proposition \ref{tree measure} there exists $\mathcal G'$ as desired. Therefore, we may assume that $\mu (\mathcal T) \geq (1 - \eps) \mu(\mathcal G)$.

    Again, we may assume that $\mu (\mathrm{Core}_\mathcal{T}(G))\geq (1 - \eps) \mu (\mathcal{T})$, or otherwise the proof would be concluded by Proposition \ref{boundary connectivity}. Contracting the edges in the finite connected components of the core -- this is at most $\eps \mu (\mathcal T)$ of the mass -- we may assume that $\mathrm{Core}_\mathcal{T}(G)=\mathcal{T}$. Indeed, Lemma \ref{nocore} shows that all connected components of the core are finite, unles they already are the whole tree. Finally, in this case, by Proposition \ref{sparse_substential} we obtain $$
    \mu (\mathcal G') < \mu (\mathcal G) - \mu (\mathcal T)/3 + \eps \mu (\mathcal T) + \eps \mu (\mathcal G) \leq \frac{1}{3}(2 + 5\eps) \mu (\mathcal G).
    $$
Choosing $\eps < 1/5$ completes the proof.
\end{proof}

\subsection{Decreasing sequence of substantial subgraphs}\label{sec:substantial}

A fundamental step in our proof of Theorem~\ref{measure_main} is to show that existence of a measurable one-ended spanning tree is equivalent to the existence  of a nested sequence of \emph{substantial} measurable subgraphs $(\mathcal G_n)_{n\in \mathbb N}$ of $\mathcal{G}$ such that $\mu(\mathcal{G}_n)\to0$. This is precisely Proposition~\ref{basic}, which this section is devoted to prove.

In the next lemma we show that, in an analogous fashion to the situation described in example \ref{ex:substantial}, whenever $\mathcal{G}' \subseteq \mathcal{G}$ is substantial, we can attach the isolated vertices to $O( \mathcal{G}')$ as part of a forest of finite trees. 

\begin{lemma}\label{lem:technical}
    Let $\mathcal{G}$ be a locally finite, one-ended, hyperfinite, mcp graph on a finite measure space $(X,\nu)$ and $\mathcal{G}' \subseteq \mathcal{G}$ a substantial measurable subgraph. Then, for any fber $\mathcal{F} \subseteq \mathcal{R}|_{\mathcal G[I(\mathcal G')]}$ and $\delta > 0$ there exists an acyclic Borel subgraph $\mathcal{T} \subseteq \mathcal{G}$ with finite connected components such that 
    \begin{itemize}
        \item[i)] $|[x]_{\mathcal{T}} \cap O(\mathcal{G}')| = 1$ for $\nu$-a.e.\ $x\in I(\mathcal{G}')$, and
        \item[ii)] $[x]_\mathcal{F} \subseteq [x]_{\mathcal{T}}$ for at least $(1 - \delta) \nu (I(\mathcal{G}'))$-many $x\in X$.
    \end{itemize}
\end{lemma}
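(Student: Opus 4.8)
The plan is to realize $\mathcal T$ as a forest of finite ``territories,'' one rooted at each $O(\mathcal G')$-vertex, obtained by cutting each $\mathcal G$-component into finite connected cells that each capture an $O(\mathcal G')$-vertex and respect $\mathcal F$.

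First I would dispose of $\mathcal F$ by treating its classes as atoms. Since $\mathcal F\subseteq\mathcal R|_{\mathcal G[I(\mathcal G')]}$, every $\mathcal F$-class is finite and $\mathcal G[I(\mathcal G')]$-connected, so (using a Borel linear order on $X$ and a canonical search) one can Borel-select a spanning tree of each $\mathcal F$-class inside $\mathcal G[I(\mathcal G')]$; I put all these edges into $\mathcal T$ and pass to the Borel quotient in which each $\mathcal F$-class is contracted to a single atom. It then suffices to produce, in the contracted graph, a Borel partition into finite connected cells, each containing exactly one $O(\mathcal G')$-vertex, splitting as few atoms as possible; rooting a spanning tree of each cell at its $O(\mathcal G')$-vertex and un-contracting yields $\mathcal T$. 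Condition i) will hold at every point lying in such a cell, and condition ii) will hold for every atom that is not split by a cell boundary.

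To build the cells I would use hyperfiniteness together with one-endedness. Writing $\mathcal R_{\mathcal G}=\bigcup_n\mathcal E_n$ as an increasing union of $\mathcal G$-connected fbers, one extracts a Borel exhaustion of each $\mathcal G$-component by finite connected sets $A_0\subseteq A_1\subseteq\cdots$ with $\mathcal G$-connected complements. Because $\mathcal G$ is one-ended, $\mathcal G\setminus A_n$ has a single infinite component, which---$O(\mathcal G')$ being the one-ended core and hence cofinal to the unique end---always contains a tail of $O(\mathcal G')$. Re-indexing the exhaustion so that a new level is declared only when a fresh $O(\mathcal G')$-vertex enters, each shell $A_n\setminus A_{n-1}$ becomes a finite connected set meeting $O(\mathcal G')$, and the shells partition a.e.\ of $X$. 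Taking these shells as the cells (refining each finite shell by a Voronoi/BFS partition toward its $O(\mathcal G')$-vertices so that each sub-cell contains exactly one such vertex, performed on the contracted graph so that atoms are never cut internally) gives the required forest: it is acyclic, has finite components, and a.e.\ $I(\mathcal G')$-vertex lands in a component with a unique $O(\mathcal G')$-vertex, so i) holds a.e. For ii), the only atoms that get split are those straddling a shell boundary; choosing the exhaustion coarse relative to $\mathcal F$ (possible since $\mathcal F$-classes are a.e.\ finite, so for a large enough scale all but $\delta\,\nu(I(\mathcal G'))$-many lie in the interior of a single shell) bounds the split atoms by $\delta\,\nu(I(\mathcal G'))$.

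The main obstacle is exactly the centerpiece of the previous paragraph: producing, Borel-measurably, finite cells that each reach $O(\mathcal G')$ and cover a.e.\ vertex. In the pmp case the naive ``route each $I(\mathcal G')$-vertex to its nearest $O(\mathcal G')$-vertex'' forest already has a.e.\ finite trees, since the Mass Transport Principle with $\w\equiv 1$ bounds the expected number of vertices routed to a given $O(\mathcal G')$-vertex by $\nu(I(\mathcal G'))<\infty$. In the genuinely mcp setting this fails: \eqref{mtp} only controls the $\w$-weighted descendant count, so the nearest-vertex trees (and, likewise, the cells of any fixed-scale partition) can be infinite on a set of positive measure. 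This is why the construction must be oriented toward the unique end rather than toward $O(\mathcal G')$ metrically---the cells are long, thin shells that climb out to the end and fold back onto a cofinal $O(\mathcal G')$-vertex---and the technical heart is carrying out this end-oriented exhaustion measurably, without a Borel transversal, using only hyperfiniteness and one-endedness.
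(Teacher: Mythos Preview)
Your outline correctly identifies the obstacle---in the mcp setting the naive nearest-$O(\mathcal G')$-vertex forest can have infinite components---but the proposed fix has a genuine gap at exactly the step you flag as ``the technical heart.'' From an increasing union $\mathcal R_{\mathcal G}=\bigcup_n\mathcal E_n$ of fbers you get, for each $n$, a \emph{partition} of each component into finite connected pieces; you do not get a single nested chain $A_0\subseteq A_1\subseteq\cdots$ without choosing a basepoint, i.e.\ a Borel transversal, which is precisely what is unavailable. Your sentence ``one extracts a Borel exhaustion of each $\mathcal G$-component by finite connected sets $A_0\subseteq A_1\subseteq\cdots$'' is thus asserting the conclusion rather than proving it, and your closing remark concedes this. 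Two further issues compound the gap: (a) even granting such an exhaustion, the shells $A_n\setminus A_{n-1}$ need not be connected, so the claim ``each shell becomes a finite connected set meeting $O(\mathcal G')$'' is unsupported; (b) nothing in the hypotheses forces $\mathcal F$-classes to be $\mathcal G[I(\mathcal G')]$-connected, so ``Borel-select a spanning tree of each $\mathcal F$-class inside $\mathcal G[I(\mathcal G')]$'' may be vacuous (this is minor---one can enlarge $\mathcal F$).

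The paper's proof avoids any global end-oriented exhaustion. It first enlarges $\mathcal F$ to an fber $\mathcal E$ whose classes are $\mathcal G$-connected and mostly touch $O(\mathcal G')$; this immediately secures condition~ii) and handles all but a small leftover set $A\subseteq I(\mathcal G')$. On the portion $A_\infty$ of $A$ whose $\mathcal G[A]$-components are infinite, it builds a Voronoi tessellation $(V_x)$ with centers on $\partial_v^{\mathrm{out}}(A_\infty)$---these cells are connected and each touches a point already attached to $O(\mathcal G')$, but some may be infinite. The substantive content is a claim showing that one can edit $(V_x)$ to $(W_x)$ so that the $\nu$-mass of vertices in infinite cells drops below any prescribed $\eta$: truncate each $V_x$ at a large radius $k$, then reattach the tails $V_x^{>k}$ either to a small-measure ``dumping'' family (when the tail has infinite outer boundary into the truncated region) or, via an auxiliary hyperfinite graph on the remaining centers, into a small transversal. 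Iterating this claim with $\eta=2^{-n}$ and applying Borel--Cantelli yields a partition into a.e.\ finite connected cells, from which the forest $\mathcal T$ is read off. This iterative Voronoi-repair is what replaces your unproven exhaustion step.
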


\begin{proof}
    By hyperfiniteness of $\mathcal{G}$ there exists a fber $\mathcal E$ such that $\mathcal{F} \subset \mathcal{E} \subset \mathcal{R}|_{\mathcal G[I(\mathcal G')]}$, has $\mathcal G$-connected equivalence classes, each of whose intersection with $I(\mathcal G')$ is also connected, and  \begin{equation*}
    \nu \left(x \in I(\mathcal G') : \exists y \in O(\mathcal{G}')\ \text{such that}\ (y,x) \in \mathcal{E}\right) \ge (1-\delta) \nu (I(\mathcal{G}')).
    \end{equation*}
    We construct $\mathcal{T}$ as the union of three distinct trees $\mathcal T_1, \mathcal T_i$, and $\mathcal T_\infty$. In the first step we construct $\mathcal{T}_1 \subset \mathcal{G} \cap \mathcal E$. Each $\mathcal E$-equivalence class $K$ such that $K \cap O(\mathcal G') \neq \emptyset$ contains a connected component of $\mathcal T_1$. This connected component is any spanning tree of $K \cap I(\mathcal G')$, which is connected by hypothesis, attached to a unique vertex in $K\cap O (\mathcal{G}')$ by a single vertex. The graph $\mathcal T_1$ already satisfies the conclusion ii). In the next step of the proof we extend its domain so that i) is met. 
    
    Let $A\subset I$ be the Borel subset consisting of those $x\in I$ such that $[x]_E \cap O (\mathcal{G}') = \emptyset$. Let $A = A_f \sqcup A_\infty$ be a Borel partition into unions of connected components of $\mathcal{G}[A]$ such that $\mathcal{G}[A_f]$ has finite connected components and $\mathcal{G}[A_\infty]$ has infinite connected components. 
    
    We let $\mathcal{T}_f \subset \mathcal G$ be defined as follows. For each connected component $K$ of $\mathcal G [A_f]$, the forest $\mathcal T_f$ contains a connected component which is given by any spanning subtree of $\mathcal G [K]$, plus one single edge attaching said subtree to $X \backslash A$. 
    
    To conclude the proof, our goal is to partition $A_\infty$ into finite $\mathcal G$-connected subgraphs adjacent to $X\backslash A$.  To this end, fix a Borel linear order\footnote{This may be substituted by a factor of iid argument using a tie-breaking induced by Uniform$[0,1]$ labels which are all distinct almost surely.} $\leq$ on $X$ and for each $x \in \partial_v^{\mathrm{out}} (A_\infty)$ define 
    \begin{equation}\label{eq:voronoi}
        V_{x} := \{ y \in A_\infty : \forall z \in \partial_v^{\mathrm{out}} (A_\infty), d_{\mathcal{G}} (x,y) \leq d _{\mathcal{G}} (z,y)\ \text{and in case of equality}\ x \leq z\}.
    \end{equation}
    The Borel family of sets $(V_{x})_{x\in \partial_v^{\mathrm{out}} (A_\infty)}$ can be understood as a Voronoi tessellation of $A_\infty$ with centers in the boundary $\partial_v^{\mathrm{out}} (A_\infty)$. If $y \in A_\infty$ we let $c(y) \in B$ be the unique element of $B$ such that $y\in V_{c(y)}$. If the $V_{x}$ were finite for a.e.\ $x\in X$, then the proof would be over. This would be the case, for instance, in the pmp setting. However, this is not guaranteed in the mcp setting. To conclude the proof we need the following claim. It shows that the $V_x$ can be locally edited so that only an arbitrarily small proportion of vertices is contained in an infinite $V_x$. 

    \begin{claim}\label{claim:partition}
        Suppose that $(V_x)_{x\in \partial_v^{\mathrm{out}} (A_\infty)}$ is a partition of $A_\infty \cup \partial_v^{\mathrm{out}} (A_\infty)$ such that, for each $x\in \partial_v^{\mathrm{out}} (A_\infty)$, the induced subgraph $\mathcal G[V_x]$ is connected and $x\in V_x$. Then, for any $\eta > 0$ there exists a partition $(W_x)_{x\in B}$ of $A_\infty \cup \partial_v^{\mathrm{out}} (A_\infty)$ such that, for each $x\in \partial_v^{\mathrm{out}} (A_\infty)$, the induced subgraph $\mathcal G[W_{x}]$ is connected, satisfies $x\in W_x$, and 
        $$
            \nu\left(y \in A_\infty :  |W_{c(y)}|=\infty\right)\leq \eta \nu (A_\infty).
        $$
    \end{claim}

    \begin{proof}[Proof of claim]
     For each $x\in \partial_v^{\mathrm{out}} (A_\infty)$ and $k\in \mathbb N$, let $$
     V_x^{\leq k}:= \{y\in V_x : d_{\mathcal G} (y,x) \leq k\},
     $$
     and denote its complement $V_x \backslash V_{x}^{\leq k}$ by $V_x^{> k}$. Note that by local-finiteness $\mathcal G [V_x^{>k}]$ has at most finitely many connected components. For simplicity of the exposition we will assume that each $\mathcal G[V_x^{>k}]$ is actually connected. The following proof can be easily adapted to the full generality treating each connected component in the same fashion. 
     
     Fix $k\in \mathbb N$ large enough such that $$
    \nu \left(\bigcup_{x\in \partial_v^{\mathrm{out}} (A_\infty)} V_x^{\leq k}\right) \geq (1 - \eta/3) \nu (A_\infty).
    $$
    Let us call the union in the above expression $V^{\leq k}$ and its complement $V^{>k} := A_\infty \backslash V^{\leq k}$.
    
    Let $B \subseteq \partial_v^{\mathrm{out}} (A_\infty)$ be the Borel subset of those $x\in \partial_v^{\mathrm{out}} (A_\infty)$ for which $|\partial_v^{\mathrm{out}} (V_x^{>k}) \cap V^{\leq k} | = \infty$. By one-endedness and local finiteness, if $x \in B$ then $\partial_v^{\mathrm{out}} (V_x^{>k}) \cap V^{\leq k}$ intersects infinitely many other $V_y$ non-trivially. It follows that there exists $C \subseteq \partial_v^{\mathrm{out}} (A_\infty)$ such that every $x \in B$ admits a $y \in C$ such that $\partial_v^{\mathrm{out}} (V_x^{>k}) \cap V_y^{\leq k} \neq \emptyset$ and $C$ has small measure:
    $$
    \nu \left(\bigcup_{x\in C} V_x\right) \leq \frac{\eta}{3} \nu (A_\infty).
    $$
    Using the Lusin--Novikov Theorem, construct a measurable choice $y(x) \in C$ for each $x\in B$ such that $\partial_v^{\mathrm{out}} (V_x^{>k}) \cap V_{y(x)}^{\leq k} \neq \emptyset$. For each $x\in B$ we let the set $W_x$ to be $V_x^{\leq k}$ and place the remaining part $V_x^{> k}$ into $W_{y(x)}$. 
    

    Let $D := \partial_v^{\mathrm{out}} (A_\infty) \backslash B$. We define a Borel graph $\mathcal H\subseteq\mathcal{R}_{\mathcal{G}}$ on $D$ by letting $(y,x) \in \mathcal H$ if and only if $\partial_v^{\mathrm{out}} (V_x^{> k}) \cap V^{> k}_y \neq \emptyset$. By one-endedness of $\mathcal{G}$ the graph $\mathcal{H}$ is aperiodic. Since $\mathcal G$ is hyperfinite, so is $\mathcal H$. Thus, there exists a Borel subgraph $\mathcal K \subset \mathcal H$ with finite connected components and a Borel transversal $D' \subset D$ such that 
    $$
    \nu \left(\bigcup_{x\in D'} V_x\right) \leq \frac{\eta}{3} \nu (A_\infty).
    $$
    For each $x\in D$ we let $z(x) \in D'$ be the unique element of $D'$ which is $\mathcal K$-connected to $x$. For each $x\in D \backslash D'$ we let $W_x$ be $V_x^{\leq k}$ and place the remainder $V_x^{>k}$ in $W_{z(x)}$ where $y \in D'$ the unique element of the transversal in the $\mathcal K$-connected component of $x$. 

    The above process concludes with a partition $(W_x)_{x\in \partial_v^{\mathrm{out}} (A_\infty)}$ that satisfies the desired properties.

    \end{proof}

   To conclude the proof of the lemma set $V_x^{(1)} := V_x$ for each $x\in \partial_v^{\mathrm{out}} (A_\infty)$, and for each $n\in \mathbb N$ given $(V_x^{n})_{x \in \partial_v^{\mathrm{out}} (A_\infty)}$, construct $(V_x^{n+1})_{x \in \partial_v^{\mathrm{out}} (A_\infty)}$ applying the claim with $\eta = 2^{-n}$. By the Borel--Cantelli lemma, for $\nu$-a.e.\ $x\in (V_x^{n})_{x \in \partial_v^{\mathrm{out}} (A_\infty)}$ there exists a least $n_x \in \mathbb N$ such that $V_x^{(N)} = V_x^{(n_x)}$ for every $N \geq n_x$. The partition $(V_x^{(n_x)})_{x \in \partial_v^{\mathrm{out}} (A_\infty)}$ satisfies the desired properties.  
\end{proof}

Under the assumption that substantial subgraphs with arbitrarily small measure exist, we can iteratively apply the above lemma to produce measurable one-ended spanning trees.

\begin{proof}[Proof of Proposition~\ref{basic}]
If $\mathcal{G}$ admits a measurable one-ended spanning tree $\mathcal{T} \subset \mathcal{G}$ the sequence $(\mathcal G_n)$ can be obtained by considering the subgraphs $\mathcal T[L^{>n}]$ where $L^{>n}$ is as in Example \ref{ex:substantial}.

To prove the other direction, we argue by induction. Set $\mathcal G_0: = \mathcal G$ and $\mathcal T_0 $ to be the empty graph. For every $n\in \mathbb N$, let $\mathcal T_n$ be constructed by applying Lemma \ref{lem:technical} to $\mathcal G_{n}$ as a substantial subgraph of $\mathcal G_{n-1} \cup \mathcal T_{n-1}$, setting parameters in the statement of the lemma $\mathcal F = \mathcal E_{n} \cap \mathcal{R}|_{\mathcal G[I(\mathcal G_{n})]}$ and $\delta = 2^{-n}$.

We claim that $\mathcal T = \bigcup_{n\in \mathbb N} \mathcal T_n$ is a measurable one-ended spanning tree for $\mathcal G$. By construction $\mathcal T \subset \mathcal G$ and $\mathcal T$ is acyclic as $(\mathcal T_{n})_{n\in \mathbb N}$ is an increasing union of trees. By the Borel--Cantelli lemma and the fact that $\mu (\mathcal G_n) \to 0$, for $\nu$-a.e.\ $x\in X$ we have that for every $m\in \mathbb N$ the inclusion $[x]_{\mathcal E_m} \subset [x]_{\mathcal T_n}$ holds for all $n$ sufficiently large. Thus, $\mathcal T$-components coincide with $\mathcal G$-components a.e. 

It is only left to show that $\mathcal T$ is one-ended. To do this, let $e \in \mathcal T$ and let $n$ be such that $e \in \mathcal T_n$. Only one connected component of $\mathcal T_n \backslash \{e\}$ intersects $O(\mathcal G_n)$, as each $\mathcal T_n$-connected component intersects $O(\mathcal G_n)$ at exactly one vertex. No edge can be added to any vertex in the other connected component of $\mathcal T_n \backslash \{e\}$ throughout the construction. Therefore $\mathcal T \backslash \{e\}$ has  at most one infinite connected component for every $e$, and so $\mathcal T$ is one-ended.
\end{proof}

\subsection{Deleting edges outside of a measurable tree}\label{sec:out of tree}

In this section we show that we can find substantial subgraphs with edge mass as close to a measurable spanning tree as desired. This is precisely Proposition \ref{tree measure}.

The following proposition ensures conditions under which edges of a one-ended Borel graph can be deleted whilst preserving one-endedness.

\begin{prop}\label{prop:stays.one.ended}
    Let $\mathcal{G}$ be a locally finite one-ended Borel graph and $\mathcal{F} \subseteq \mathcal{R}_{\mathcal{G}}$ a fber with $\mathcal{G}$-connected classes. Let $Q \subseteq \mathcal{F}$ be a Borel subset such that $\mathcal{F}$-classes are still $\mathcal{G} \backslash Q$-connected. Then $\mathcal{G} \backslash Q$ is one-ended. 
\end{prop}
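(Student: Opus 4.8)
=== PROOF PROPOSAL ===

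\textbf{Approach.}
The plan is to argue by contraposition at the level of ends: I want to show that any putative second end of $\mathcal{G} \setminus Q$ would have already been a second end of $\mathcal{G}$, contradicting one-endedness. Concretely, recall that a connected locally finite graph is one-ended precisely when, for every finite set of vertices $F$, the graph minus $F$ has exactly one infinite connected component. So fix a $(\mathcal{G} \setminus Q)$-connected component $C$ (a.e.\ these coincide with $\mathcal{G}$-components, since deleting edges within $\mathcal{F}$-classes that keep those classes connected does not disconnect anything), and fix a finite vertex set $F \subseteq C$. I must show $(\mathcal{G} \setminus Q)[C \setminus F]$ has exactly one infinite component.

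\textbf{Key steps.}
First I would observe that, since each $\mathcal{F}$-class is finite and $\mathcal{G}$-connected, and since $Q \subseteq \mathcal{F}$ with the $\mathcal{F}$-classes remaining $(\mathcal{G} \setminus Q)$-connected, the edges we delete are confined inside finite blocks (the $\mathcal{F}$-classes), and within each block connectivity is preserved. Thus for any two vertices $u,v$ in a common $\mathcal{F}$-class, there is a $(\mathcal{G}\setminus Q)$-path between them lying entirely inside that finite class. Second, I would take the finite set $F$ and enlarge it to $\widehat{F} := \bigcup_{x \in F} [x]_{\mathcal{F}}$, the union of all $\mathcal{F}$-classes meeting $F$; since $\mathcal{F}$ is an fber, $\widehat{F}$ is still finite. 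Because $\mathcal{G}$ is one-ended, $\mathcal{G}[C \setminus \widehat{F}]$ has exactly one infinite component, call it $C_\infty$. Third, and this is the crux, I would show that $C_\infty$ remains connected in $\mathcal{G} \setminus Q$: any $\mathcal{G}$-edge $(a,b)$ of $C_\infty$ that happens to lie in $Q$ has both endpoints in a common $\mathcal{F}$-class $K$, and since $K \cap \widehat{F} = \emptyset$ (as $\widehat{F}$ is $\mathcal{F}$-saturated), the entire finite block $K$ survives in $C \setminus \widehat{F}$; the preserved $(\mathcal{G}\setminus Q)$-connectivity inside $K$ then reroutes the edge $(a,b)$ without leaving $C_\infty$. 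Hence $C_\infty$ is a single infinite $(\mathcal{G}\setminus Q)$-component. Finally, I would note that every other vertex of $C \setminus F$ either lies in $C_\infty$ or is connected into it, except possibly finitely many vertices in $\widehat{F}\setminus F$, which contribute only finite components; so $(\mathcal{G}\setminus Q)[C \setminus F]$ has exactly one infinite component.

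\textbf{Main obstacle.}
The delicate point is the rerouting argument in the third step: I need to be sure that replacing a deleted edge $(a,b) \in Q$ by an alternate path inside its $\mathcal{F}$-class keeps everything within the \emph{infinite} component $C_\infty$ rather than spilling into one of the finite pieces cut off by $\widehat{F}$. This works because I chose $\widehat{F}$ to be $\mathcal{F}$-saturated, so no $\mathcal{F}$-class is split by the removal of $\widehat{F}$: a class is either entirely inside $\widehat{F}$ or entirely disjoint from it. Getting this saturation bookkeeping right — and confirming that the detour paths stay in $C\setminus\widehat F$ — is where the real content lies, whereas the reduction to finite vertex sets and the passage from $F$ to $\widehat F$ are routine. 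A secondary subtlety is handling the a.e.\ identification of $\mathcal{G}$- and $(\mathcal{G}\setminus Q)$-components and ensuring all the sets involved ($\widehat F$, $C_\infty$) are genuinely finite or have the claimed connectivity, but these follow directly from local finiteness and the fber hypothesis.
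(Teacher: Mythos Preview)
Your proposal is correct and follows essentially the same approach as the paper: both arguments enlarge the finite separating set to its $\mathcal{F}$-saturation, use one-endedness of $\mathcal{G}$ to get a unique (cofinite) infinite component after removing this saturated set, and then reroute any $Q$-edge along a $(\mathcal{G}\setminus Q)$-path inside its $\mathcal{F}$-class, which stays in the complement of the saturated set precisely because the latter is $\mathcal{F}$-saturated. The only cosmetic difference is that the paper phrases the argument via removing a finite set of \emph{edges} rather than vertices, but the saturation-and-rerouting idea is identical.
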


\begin{proof}
Consider an arbitrary  connected component $G$ of $\mathcal{G}$ and let $H := G \backslash Q$. Our goal is to show that for any finite $C \subset E(H)$ the graph $H \backslash C$ has a unique infinite connected component. The $\mathcal{F}$-saturation $K$ of the set of end-vertices of edges in $C$ is finite. Therefore, by hypothesis $G \backslash K$ has a unique infinite connected component $U$. By local finiteness we have, moreover, that $|V(G) \backslash U| < \infty$.

Any two vertices $u,v \in U$ are connected by a path $P$ in $G\backslash K$. If $e \in E(P) \cap Q$, then $e \in \mathcal{F}$, so since $\mathcal{F}$-classes are $\mathcal{G}\backslash Q$-connected, there exists a path $P_e$ between the end-vertices of $e$, with no edge from $Q$, and contained in the $\mathcal{F}$-saturation of $e$. Therefore, substituting such edges $e$ by the corresponding paths $P_e$ we obtain a path $P'$ in $G \backslash (Q \cup K)$ between $u$ and $v$. It follows that $U$ is connected in $H \backslash C$. Since $U$ is cofinite $H\backslash C$ can only have one infinite connected component.
\end{proof}

Applying the above proposition in presence of a measurable spanning tree yields the main result of the section.

\begin{proof}[Proof of Proposition \ref{tree measure}]
    Let $(E_n)_{n\in \mathbb N}$ be a witness to hyperfiniteness of $\mathcal G$ such that $E_n$-equivalence classes are $\mathcal T$-connected. For each $n \in \mathbb N$ we define a Borel graph $\mathcal G_n'$ by deleting from $\mathcal G$ all edges in $E_n$ which do not belong to $\mathcal T$. Precisely, these graphs are defined by $$
    \mathcal G_n' := \mathcal T \cup (\mathcal G \cap (R_{\mathcal G} \backslash E_n).
    $$
    The $\mathcal G_n'$ are one-ended by Proposition \ref{prop:stays.one.ended}, are such that $\mathcal R_{\mathcal G_n'} = \mathcal R_{\mathcal G}$ and $\mu (\mathcal G_n) \to \mu (\mathcal T)$.
\end{proof}

\subsection{Deleting edges outside of the core}\label{finite}

Towards proving Theorem \ref{measure_main}, we can assume that most of the mass of $\mathcal G$ is contained in $\mathcal T$ after Proposition \ref{tree measure}. Hence, in order to make progress in deleting edges, we must focus our attention on deleting edges of $\mathcal T$ in a way that produces substantial subgraphs. Our analysis of this setting revolves around the notion of core, introduced in Definition \ref{def:core}. The following proposition serves as a motivation to the study of the core.

\begin{prop}\label{fundamental}
    Let $\mathcal{T}$ be a measurable spanning tree of a one-ended locally finite mcp graph $\mathcal{G}$ on a standard probability space $(X,\nu)$. Then $\mathcal{T}$ is one-ended if and only if $\mathrm{Core}_\mathcal{T}(\mathcal{G})=\emptyset$ a.e.
\end{prop}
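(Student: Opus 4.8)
The plan is to establish a pointwise dictionary between core edges and bi-infinite behaviour in $\mathcal{T}$, and then read off the a.e.\ statement. Fix a $\mathcal{G}$-component and recall that each non-tree edge $f \in \mathcal{G} \setminus \mathcal{T}$ determines a unique fundamental cycle, consisting of $f$ together with the unique $\mathcal{T}$-path joining its endpoints. Thus, for a tree edge $e$, writing $\mathcal{T} \setminus \{e\} = A \sqcup B$ for the two subtrees obtained by deleting $e$, the non-tree edges whose fundamental cycle passes through $e$ are exactly those with one endpoint in $A$ and the other in $B$. Consequently $e \in \mathrm{Core}_\mathcal{T}(\mathcal{G})$ if and only if there are infinitely many $\mathcal{G}$-edges between $A$ and $B$. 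First I would record the elementary tree fact that an infinite, locally finite tree is one-ended if and only if deleting any single edge leaves one finite and one infinite piece; equivalently, it has at least two ends precisely when some edge $e$ has both $A$ and $B$ infinite. (The nontrivial implication picks two rays toward distinct ends, follows them to their last common vertex, and takes the first edge along one of them.)

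The heart of the proof is the pointwise claim that, on the co-null set where $\mathcal{G}$ is one-ended and locally finite, $e \in \mathrm{Core}_\mathcal{T}(\mathcal{G})$ if and only if both $A$ and $B$ are infinite. One direction is immediate from local finiteness: if, say, $B$ is finite, then only finitely many $\mathcal{G}$-edges meet $B$, so only finitely many edges cross between $A$ and $B$, and $e \notin \mathrm{Core}_\mathcal{T}(\mathcal{G})$. The other direction is where one-endedness of $\mathcal{G}$ (as opposed to of $\mathcal{T}$) enters, and this is the main obstacle. Suppose $A$ and $B$ are both infinite but only finitely many $\mathcal{G}$-edges cross between them; let $W$ be the finite set of endpoints of these crossing edges (note $W$ contains both endpoints of $e$). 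Deleting $W$ from $\mathcal{G}$ destroys every $A$–$B$ edge, so $A \setminus W$ and $B \setminus W$ lie in distinct components of $\mathcal{G} \setminus W$; since $A$ and $B$ are infinite, locally finite, and connected in $\mathcal{T}$, each of $A \setminus W$ and $B \setminus W$ contains an infinite component. Hence $\mathcal{G} \setminus W$ has at least two infinite components, contradicting one-endedness of $\mathcal{G}$.

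Combining the tree fact with the pointwise claim yields, for a.e.\ $x$, that $[x]_\mathcal{T}$ is one-ended if and only if it contains no core edge. To conclude I would assemble this into the measure statement. If $\mathcal{T}$ is one-ended a.e., then a.e.\ vertex lies in a one-ended component and is therefore incident to no core edge, so $\deg_{\mathrm{Core}}(x) = 0$ for $\nu$-a.e.\ $x$ and $\mu(\mathrm{Core}_\mathcal{T}(\mathcal{G})) = 0$. Conversely, if $\mathcal{T}$ fails to be one-ended on a positive-measure set of components, then each such component, having at least two ends, contains a bi-infinite geodesic, and every edge of this geodesic has both sides infinite and hence lies in the core; thus the core is nonempty on a positive-measure set of components.

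The remaining delicate point is the bookkeeping between the two natural formalizations of ``$\mathrm{Core}_\mathcal{T}(\mathcal{G}) = \emptyset$ a.e.'': that the edge set is $\mu$-null, and that a.e.\ component contains no core edge. Since the pointwise claim shows that core edges occur only inside $\geq 2$-ended components, the $\mathcal{R}$-invariant set of components carrying a core edge coincides with the $\mathcal{R}$-invariant set of components on which $\mathcal{T}$ is not one-ended; identifying these two invariant sets is what lets the forward and reverse directions above match up, and I would make this identification explicit to close the argument cleanly.
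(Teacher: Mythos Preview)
Your proof is correct and follows essentially the same approach as the paper's: both identify that the fundamental cycles through a tree edge $e$ correspond bijectively to the $\mathcal{G}$-edges crossing the partition $\mathcal{T}\setminus\{e\}=A\sqcup B$, use local finiteness for the forward direction, and use one-endedness of $\mathcal{G}$ for the converse. Your version is simply more explicit---you spell out the finite vertex set $W$ whose removal would yield two infinite components, and you take more care with the a.e.\ bookkeeping---whereas the paper asserts the key implication (``since $\mathcal{G}$ is one-ended, there must be infinitely many $\mathcal{G}$-edges between $A$ and $B$'') without further justification.
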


\begin{proof}
    Removing any edge $e \in \mathcal T$ from its $\mathcal T$-connected component results in two connected components, say $A$ and $B$. The number of fundamental cycles $e$ is contained in is, precisely, the number of $\mathcal G$-edges between these two connected components. Thus, if $\mathcal T$ is one-ended, then one of these connected components is finite, and so the number of $\mathcal G$-edges between $A$ and $B$ is finite by local finiteness of $\mathcal G$. We deduce that if $\mathcal T$ is one-ended then $\mathrm{Core}_\mathcal{T}(\mathcal{G})=\emptyset$ a.e.

    To prove the converse, observe that if $\mathcal T$ has a component with at least two ends, then there exists $e\in \mathcal T$ in such component for which $A$ and $B$ above are infinite. Since $\mathcal G$ is one-ended, there must be infinitely many $\mathcal G$-edges between $A$ and $B$. It follows that such an $e$ does not belong to the core, concluding the proof.
\end{proof}

The study of the core is related to the study of generating sets of cycle spaces.

\begin{definition}
    A \textbf{Borel generating set} for the cycle space a Borel graph $\mathcal{G}$ is a Borel family of cycles $\mathcal B \subset [\mathcal{G}]^{< \infty}$ whose indicator functions generate the cycle space $\mathcal C( \mathcal{G})$.
\end{definition}

\begin{example}
    If $\mathcal{T} \subset \mathcal{G}$ is a Borel spanning tree, then the Borel family of fundamental cycles is a Borel generating set. In fact it is moreover a basis of the cycle space. This basis is \textbf{feasible}, in the sense of \cite{Timar_oneended}, i.e., every edge is contained in only finitely many cycles of $\mathcal B$, when both $\mathcal T$ and $\mathcal G$ are one-ended. 
\end{example}

\begin{definition}
Let $\mathcal{G}$ be a Borel graph with Borel generating set for its cycle space $\mathcal B$. For each $e\in \mathcal{G}$ we let $$
e^\ast := \{C \in \mathcal B : e \in E(C)\}.
$$

For a Borel subset $U\subset X$, we define the \textbf{kernel of $U$ (with respect to a Borel generating set $\mathcal B$)} as $$
\ker (U) := \{e \in \mathcal{G} : e^\ast \subset \mathcal{G}[U]\}.
$$ 
\end{definition}

If $\mathcal G$ is a one-ended Borel graph and $H \subset \mathcal G$ is a finite subgraph, we denote by $\ext (H) \subset V(H)$ the \textbf{exterior of $H$}: the set of vertices in $V(H)$ belonging to an $H$-connected component for which there exists an infinite ray $R$ in $\mathcal G$ intersecting $H$ only at its starting vertex.

\begin{lemma}\label{lem:exterior.conn}
    Suppose that $\mathcal{G}$ is one-ended and let $F\subset [
    X]^{<\infty}$ be such that the induced subgraph $\mathcal{G} [F]$ is connected. Then $\ext(\mathcal G[F] \backslash \ker (F))$ is connected.
\end{lemma}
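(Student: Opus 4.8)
The plan is to fix the unique infinite component $U$ of $\mathcal G\setminus F$ (unique by one-endedness and local finiteness, as $F$ is finite) and reduce the statement to a connectivity claim about the outer vertex boundary $\partial:=\partial_v^{\mathrm{out}}(U)$, which is contained in $F$ since $U$ is a component of $\mathcal G\setminus F$. First I would observe that an infinite ray meeting $F$ only at its start must lie in $U$, so an $H$-component of $H:=\mathcal G[F]\setminus\ker(F)$ is one of the components comprising $\ext(H)$ if and only if it meets $\partial$; thus $\ext(H)$ is exactly the union of the $H$-components meeting $\partial$. Since every vertex of $\ext(H)$ is joined, inside its exterior $H$-component and hence inside $\mathcal G[\ext(H)]$, to a vertex of $\partial$, it suffices to prove constructively that any two $a,b\in\partial$ are joined in $\mathcal G[\ext(H)]$.

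The heart of the argument is a $\mathbb Z_2$ ``boundary trace''. Choosing neighbours $u_a,u_b\in U$ of $a,b$, a path between them inside the connected set $U$, and an $a$--$b$ path inside the connected graph $\mathcal G[F]$, I form a cycle $Z$ whose only edges crossing between $F$ and $U$ are $au_a$ and $u_bb$. Writing $Z=\sum_{C\in\mathcal S}C$ with $\mathcal S\subseteq\mathcal B$ finite and applying the linear map $\phi\colon\mathbb Z_2[E(\mathcal G)]\to\mathbb Z_2[\partial]$ sending a crossing edge $pu$ (with $p\in\partial$, $u\in U$) to $e_p$ and every other edge to $0$, I get $\phi(Z)=e_a+e_b$, while $\phi(C)=\sum_{\alpha}(e_{p_\alpha}+e_{q_\alpha})$, where $\alpha$ runs over the maximal subarcs of $C$ lying in $U$ and $p_\alpha,q_\alpha\in\partial$ are their endpoints.

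Next I would use that along any cycle the subarcs lying in $U$ and the subarcs avoiding $U$ alternate, so their two collections of endpoint-pairs have equal $\mathbb Z_2$-sum. Summing over $\mathcal S$ this trades the $U$-arc pairing for the avoiding-$U$ pairing and gives $\sum_{C}\sum_{\beta}(e_{p_\beta}+e_{q_\beta})=e_a+e_b$, where $\beta$ ranges over the maximal subarcs of the $C$'s that avoid $U$. Thus the multiset of endpoint-pairs of these $\beta$'s has odd degree exactly at $a$ and $b$, so it contains an $a$--$b$ path $a=r_0,\dots,r_m=b$ in which each step $\{r_{t-1},r_t\}$ is the endpoint-pair of such an arc $\beta_t$ of some $C_t\in\mathcal S$. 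Since $C_t$ leaves $F$ (it has a $U$-arc), every edge of $C_t$ inside $F$ is non-kernel, so in the basic case where $\mathcal G\setminus F=U$ one has $\beta_t\subseteq H$; as its endpoints lie in $\partial$, the arc $\beta_t$ lies in an exterior $H$-component, so $r_{t-1}$ and $r_t$ are joined in $\mathcal G[\ext(H)]$. Concatenating these joins $a$ to $b$, as required.

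The main obstacle is conceptual rather than computational: a priori the two boundary points seem to communicate only through $U$, and the cycle space alone is in fact consistent with $\ext(H)$ being disconnected, so one-endedness must be turned into an explicit combinatorial chain. The device above — projecting $Z$ onto the $F$--$U$ cut and then using the alternation identity to replace the useless $U$-arc pairing by the usable $F$-arc pairing — is precisely this conversion, and discovering it is the crux. The remaining technical point is that $\mathcal G\setminus F$ may have finite components, whose boundaries create interior $H$-components; then an arc $\beta_t$ can detour through such a component, so $\{r_{t-1},r_t\}$ is joined in $H$ only between successive crossings of it. I would resolve this by running the same cut-projection argument with a finite component in the role of $U$ (equivalently, by induction on the number of finite components), showing that two exterior vertices adjacent to a common finite component are themselves joined in $\mathcal G[\ext(H)]$; alternatively one first reduces to the case $\mathcal G\setminus F=U$.
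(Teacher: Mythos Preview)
Your approach differs from the paper's. The paper argues by a short induction on the length $n$ of a decomposition $P+Q=B_1+\cdots+B_n$ in the cycle space: here $P\subset\mathcal G[F]$ is a path between two exterior vertices and $Q$ is an edge-disjoint path through the unique end. At each step, if some $B_i\not\subset\mathcal G[F]$ then $B_i\cap\ker(F)=\emptyset$ by definition of the kernel, so one may replace $Q$ by $Q+B_i$ and decrease $n$; once all remaining $B_i$ lie in $\mathcal G[F]$, necessarily $Q\subset F\setminus\ker(F)=H$. Your boundary-trace argument is a valid alternative in the spirit of Tim\'ar's boundary connectivity results (which the paper in fact cites as another route), and it has the virtue of producing an explicit combinatorial chain along the boundary rather than an existential path.

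There is, however, a real wrinkle in your handling of the finite components of $\mathcal G\setminus F$. Neither of your two proposed fixes is clean: enlarging $F$ to absorb the finite components changes $\ker(F)$, so this is not a direct reduction; and running the trace with a finite component $W$ in the role of $U$ produces ``avoiding-$W$'' arcs that may wander through $U$ or through other finite components, so the recursion has no evident termination. (Also, the intermediate crossing points on $\partial_v^{\mathrm{out}}(W)$ need not lie in $\ext(H)$, contrary to what you write.) The simple repair is to run your trace \emph{once}, but with respect to the cut $F$ versus $X\setminus F$ rather than $U$ versus $X\setminus U$. Then the avoiding arcs lie in $F$; each contributing cycle $C\in\mathcal B$ leaves $F$ and hence misses $\ker(F)$, so these arcs lie in $H$. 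Concatenating yields an $a$--$b$ walk in $H$, which is all that is needed: the intermediate $r_t$ need not lie in $\ext(H)$, since once $a$ and $b$ share an $H$-component, that component is automatically exterior.
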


\begin{proof}
    Let $P \subset \mathcal{G}[F]$ be a path with end-vertices in $\ext(\mathcal G[F] \backslash \ker (F))$ and containing edges of $\ker (F)$. We show that there exists a path in $\mathcal G[F]$ disjoint from $\ker (F)$ with the same end-vertices as $P$. 
    Since the end-vertices of $P$ are in the exterior there exists a path $Q$ in $\mathcal{G} \backslash \ker (F)$ with the same end-vertices as $P$ and disjoint from $P$. Since $\mathcal B$ is generating, there exist $B_1, \dots, B_n \in \mathcal B$ such that $$
    Q+P = B_1 +\ldots + B_n,
    $$
    in the edge space. We conclude the proof by induction. If $n=1$, we have that $Q + P \in \mathcal B$. Since there are edges of the kernel in $Q+P$, we have that $Q+P \subset F$, so $Q \subset F \backslash \ker (F)$.

    Let us now fix $n$ and suppose that for every $k < n$, if $P + Q = B_1 +\dots + B_k$, then there exists $Q'$ with the same end-vertices as $P$ and disjoint from $\ker (F)$. If $P + Q = B_1 +\dots + B_n$ is contained in $F$ we are done. Otherwise, say, $B_n \subsetneq F$. This implies that $B_n \cap \ker (F) = \emptyset$ by definition of the kernel. We thus have that $$
    P + Q + B_n = B_1 + \ldots+ B_{n-1},
    $$
    is such that $Q' + B_n$ satisfies the induction hypotheses. The conclusion now follows.
\end{proof}

\begin{remark}
    An alternative proof for the above statement may be obtained using the boundary connectivity results of Tim\'ar \cite{TimarBC}.
\end{remark}

As a conclusion of the above lemma we deduce that we can remove arbitrarily large proportions of edges in the complement of $\mathrm{Core}_\mathcal{T}(\mathcal{G})$ resulting in a substantial subgraph as stated in Proposition~\ref{boundary connectivity}. 

\begin{proof}[Proof of Proposition~\ref{boundary connectivity}]
By hyperfiniteness of $\mathcal{G}$ there exists an fber $\mathcal E \subseteq \mathcal R_{\mathcal G}$ such that $(1- \varepsilon)$ many of the edges of $\mathcal{T}\setminus \mathrm{Core}_\mathcal{T} (\mathcal{G})$ belong to the kernel of their $\mathcal E$-classes. We can remove these edges preserving connectivity by Lemma \ref{lem:exterior.conn} and one-endedness by Proposition \ref{prop:stays.one.ended}.
\end{proof}

\begin{remark}
    The above argument can be adapted to show that any one-ended, hyperfinite Borel graph that admits a Borel feasible generating set also admits a Borel one-ended spanning forest.  In particular, any one-ended, hyperfinite, planar Borel graph admits a Borel one-ended spanning forest.  Under these assumptions, this refines a result of Conley, Gaboriau, Marks and Tucker-Drob \cite[Theorem 10]{cgmtd}, who proved the same thing in the measurable setting but without the one-ended and hyperfinite assumptions. Note that in the Borel setting the one-ended assumption is necessary, even for hyperfinite graphs, as there are acyclic (hence planar) and hyperfinite counterexamples to the Borel Brooks' theorem \cite{conley2020borel}.
\end{remark}

After Proposition \ref{boundary connectivity} we can assume that most of the mass of $\mathcal T$ lies in the core. The following lemma ensures that the connected components of the core, which bear a small mass by assumption, can be contracted to points in a Borel fashion preserving local finiteness and one-endedness.

\begin{lemma}\label{nocore}
    Let $\mathcal{G}$ be a one-ended hyperfinite Borel graph with measurable spanning tree $\mathcal{T}$. Then {$\mathcal T \backslash \mathrm{Core}_{\mathcal{T}}(\mathcal{G})$} is either equal to $\mathcal{T}$ or it has finite connected components a.e.
\end{lemma}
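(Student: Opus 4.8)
The plan is to work one $\mathcal{T}$-component at a time, prove a local dichotomy, and then patch the pieces together measurably. Fix, for a.e.\ $x$, the connected component $T := [x]_\mathcal{T}$; since $\mathcal{R}_\mathcal{T} = \mathcal{R}_\mathcal{G}$ and $\mathcal{G}$ is one-ended, $T$ is an infinite, locally finite tree sitting inside a one-ended component of $\mathcal{G}$. The starting point is the characterization of core edges read off from the proof of Proposition~\ref{fundamental}: removing an edge $e \in T$ splits $T$ into two pieces $A_e, B_e$, the number of fundamental cycles through $e$ equals the number of $\mathcal{G}$-edges between $A_e$ and $B_e$, and by local finiteness together with one-endedness this number is infinite precisely when both $A_e$ and $B_e$ are infinite. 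Thus $e \in \mathrm{Core}_\mathcal{T}(\mathcal{G})$ iff both sides of $e$ are infinite, and a non-core edge has exactly one finite side.

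First I would record two elementary structural facts. (1) The core $T_0 := E(T) \cap \mathrm{Core}_\mathcal{T}(\mathcal{G})$, when nonempty, is a connected (hence convex) subtree of $T$: if $e, f$ are core and $g$ lies on the tree-path between them, then the side of $g$ containing $e$ contains the infinite far side of $e$, and the side containing $f$ contains the infinite far side of $f$, so both sides of $g$ are infinite and $g$ is core. (2) If $e'$ is an edge lying on the finite side $A_e$ of a non-core edge $e$, then one side of $e'$ is contained in $A_e$ and is therefore finite, so $e'$ is itself non-core; consequently every finite side is built entirely from non-core edges.

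The local dichotomy then reads as follows. If $T_0 = \emptyset$ then $T \setminus \mathrm{Core}_\mathcal{T}(\mathcal{G})$ is all of $T$, i.e.\ the full $\mathcal{T}$-component; by Proposition~\ref{fundamental} this is exactly the case where $T$ is one-ended. Otherwise $T_0 \neq \emptyset$, and I claim every connected component $C$ of the non-core edges of $T$ is finite. Since $T$ is connected and $T_0$ is a convex subtree, tracing any path from $C$ toward $T_0$ and stopping at the first edge leaving $C$ (which must be a core edge, as a non-core edge would extend $C$) shows that $C$ meets $V(T_0)$ in exactly one vertex $v$: at least one because of this path, at most one because two vertices of $T_0$ are joined inside the convex $T_0$ by a path of core edges and hence cannot be joined by non-core edges. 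Now every non-core edge $e$ incident to $v$ has its $v$-side infinite: $v$ is an endpoint of some core edge $g$, the far side of $g$ is infinite, and this far side is contained in the $v$-side of $e$; hence by fact (2) the opposite side of $e$, the branch hanging off $v$, is finite and consists entirely of non-core edges. Since $v$ has finite degree, $C$ is $v$ together with finitely many finite branches, so $C$ is finite.

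Finally I would assemble the global statement. The set of core edges is Borel by Definition~\ref{def:core}, the property ``my $\mathcal{T}$-component has empty core'' determines an $\mathcal{R}_\mathcal{G}$-invariant Borel set $Y$, and finiteness of a non-core component is a Borel condition, so the per-component dichotomy patches together measurably: on $Y$ the graph $\mathcal{T}\setminus\mathrm{Core}_\mathcal{T}(\mathcal{G})$ agrees with the corresponding $\mathcal{T}$-components, while on $X \setminus Y$ all of its connected components are finite. The main obstacle, and the only genuinely nontrivial point, is ruling out an infinite non-core component when the core is nonempty; this is exactly where the convexity of $T_0$ (to force ``one $T_0$-vertex per non-core component'') and the presence of a core edge at that vertex (to orient every incident non-core edge's finite side away from $T_0$) are both essential.
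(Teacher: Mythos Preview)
Your proof is correct. Both you and the paper work component-wise and split on whether the core is empty (equivalently, whether $T$ is one-ended, via Proposition~\ref{fundamental}), but the non-trivial case is handled differently. The paper argues by contradiction through K\H{o}nig's lemma: an infinite non-core component contains a ray $R$ of non-core edges; since $T$ then has at least two ends, some edge $e$ on $R$ separates $T$ into two infinite pieces, and one-endedness of $G$ forces infinitely many fundamental cycles through $e$, i.e.\ $e\in\mathrm{Core}_T(G)$, a contradiction. You instead establish the structural fact that $\mathrm{Core}_T(G)$ is a convex subtree and then directly exhibit each non-core component as a single vertex of $V(T_0)$ with finitely many finite branches hanging off it. The paper's route is shorter; yours extracts more structure (convexity of the core, the unique attachment vertex) that is not needed for the lemma but gives a clearer picture of how $T$ decomposes. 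Your closing measurability paragraph is fine but superfluous: the lemma is a component-wise dichotomy about a fixed Borel subgraph, so once the per-component statement is proved there is nothing to patch.
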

\begin{proof}
    Let $G$ be a connected component of $\mathcal G$ such that $T := G\cap \mathcal T$ is a spanning subtree of $G$. Suppose that there exists an infinite connected component in $G\backslash \mathrm{Core}_{T}(G)$. Since $\mathcal G$ is locally finite there exists a ray $R \subset G \backslash \mathrm{Core}_{T}(G)$ by K\H{o}nig's lemma. 
    
    By Proposition \ref{fundamental}, if $T$ is one-ended, then $\mathrm{Core}_{T}(G) = \emptyset$ and the proof is over. Otherwise, if $T$ has more than one end, since $R$ represents one of the ends of $T$, there exists $e \in E(R)$ such that $\mathcal T \backslash \{e\}$ consists of two infinite connected components. By one-endedness of $G$, there must exist infinitely many edges between the two connected components. But each of such edges induces a fundamental cycle containing $e$. Therefore $e \in \mathrm{Core}_{T}(G)$, a contradiction.
\end{proof}

\begin{remark}[Empty core]\label{nocore}
Iteratively applying Proposition \ref{boundary connectivity}, we can assume that we've produced a substantial subgraph $\mathcal{G}'\subset \mathcal{G}$ with measurable spanning tree $\mathcal T'$ such that $\mu(\mathcal T' \backslash \mathrm{Core}_{\mathcal{T}'}(\mathcal{G}'))/\mu(\mathcal{G}')$ is as small as desired.  If not, then we would be done by Proposition~\ref{basic}.  Consequently, up to contracting a set of edges in $\mathcal{T}'$ of arbitrarily small measure, we can assume that a.e.\ edge in $\mathcal{T}'$ is in infinitely many fundamental cycles.  In particular, we may assume that $\mathcal{T}'$ has no leaves and $\mathrm{Core}_{\mathcal{T}'}(\mathcal{G})=\mathcal T'$.
\end{remark}

\subsection{Deleting edges in the core}\label{infinite}

This is the last section in the proof of Theorem \ref{measure_main} and has the goal of proving Proposition \ref{sparse_substential}.  From Remark \ref{nocore}, we may assume that $\mathcal{G}$ has a measurable spanning tree $\mathcal{T}$ with $\mathrm{Core}_{\mathcal{T}}(\mathcal{G})=\mathcal T$.  Notice that if we did not have this assumption, then $\mathcal{G}$ could have a set of leaves of large measure whose removal might result in a non-substantial subgraph.  Remark \ref{nocore} allows us to assume that $\mathcal{T}$ has no leaves, which we will later use to argue that $\mathcal{T}$ is $3$-edge-connected.

Recall that $\mathcal{G}$ is a locally finite, one-ended, hyperfinite, mcp graph on a standard probability space $(X,\nu)$. As we mentioned in Subsection~\ref{sec:prelim_measured}, $\mathcal{G}$ is naturally equipped with a relative weight function $\w$ given by the Radon--Nikodym cocycle of its connectedness relation with respect to $\nu$. By \cite[Lemma 3.21]{jkl}, since $\mathcal{G}$ is hyperfinite it admits a measurable spanning tree $\mathcal{T}$ that admits a Borel selection of one or two of its ends in each component. In the latter case one can derive Theorem~\ref{measure_main} using similar argument to that in the proof of Lemma~\ref{lem:technical}.

\begin{prop}\label{2 ends}
    Let $\mathcal G$ be a one-ended hyperfinite locally finite mcp graph and $\mathcal T$ a measurable spanning tree. If $\mathcal T$ admits a Borel selection of two of its ends in a.e.\ component, then $\mathcal{G}$ admits a measurable one-ended spanning tree.
\end{prop}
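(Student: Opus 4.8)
The plan is to use the Borel selection of two ends to reorganize $\mathcal{T}$ around the bi-infinite geodesics it determines, and then to fill in the remaining vertices with finite trees exactly as in Lemma~\ref{lem:technical}. First I would extract, in $\nu$-a.e.\ component, the unique bi-infinite geodesic $\ell$ (the \emph{spine}) joining the two selected ends; this is a Borel bi-infinite line, and since these are the only ends of $\mathcal{T}$ in that component, $\mathcal{T}\setminus E(\ell)$ is a forest of finite trees, each attached to $\ell$ at a single vertex. It therefore suffices to build a one-ended spanning tree whose infinite part is organized along the spines, letting each finite hanging tree ride along with the spine vertex from which it hangs.

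The key point is that one-endedness of $\mathcal{G}$ lets us merge the two ends of each spine into a single end \emph{without} ever choosing an orientation of $\ell$, which would be impossible to do measurably (for instance in the ergodic pmp regime). Concretely, I would fix a witness to hyperfiniteness, an increasing sequence of fbers $\mathcal{E}_1\subseteq\mathcal{E}_2\subseteq\cdots$ with $\mathcal{G}$-connected classes and $\bigcup_n\mathcal{E}_n=\mathcal{R}_{\mathcal{G}}$, organized along the spine, so that the classes $[x]_{\mathcal{E}_n}$ form a Borel exhaustion of each component by finite connected sets. Because $\mathcal{G}$ is one-ended, the complement of each such finite set has a unique infinite $\mathcal{G}$-component, so the exhaustion behaves like a Borel toast: one assigns to a.e.\ vertex a ``parent'' edge pointing into the annulus $[x]_{\mathcal{E}_{n+1}}\setminus[x]_{\mathcal{E}_n}$, so that every parent-path is cofinal in the exhaustion. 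Following parents then converges to one end regardless of the direction along $\ell$, so both ends of the spine feed into the single end at infinity. This produces a substantial subgraph $\mathcal{G}'$ whose one-ended part $O(\mathcal{G}')$ is the resulting outward tree and whose isolated part $I(\mathcal{G}')$ consists of the vertices the assignment fails to absorb coherently. Applying Lemma~\ref{lem:technical} to $\mathcal{G}'$ attaches $I(\mathcal{G}')$ to $O(\mathcal{G}')$ by finite trees, and since attaching finite trees at single vertices to a one-ended tree preserves one-endedness, acyclicity, and spanning, the result is the desired measurable one-ended spanning tree.

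The main obstacle is carrying out the outward parent assignment measurably in the mcp setting so that $O(\mathcal{G}')$ is genuinely a connected one-ended tree a.e. In the pmp case the induced dynamics along $\ell$ is conservative and the exhaustion reaches outward in finitely many steps for a.e.\ vertex, but for a general mcp graph the line can be dissipative, so a positive-measure set of vertices may fail to be absorbed into a well-defined annular piece. This is exactly the phenomenon of infinite Voronoi cells encountered in the proof of Lemma~\ref{lem:technical}, and I expect it to be handled by the same device: iteratively edit the assignment using a Voronoi-type tessellation with centers on the boundaries $\partial_v^{\mathrm{out}}(\cdot)$, invoke hyperfiniteness and the Lusin--Novikov theorem to reroute the finitely escaping mass, and run the editing with error parameters $\eta=2^{-n}$ so that the Borel--Cantelli lemma guarantees that $\nu$-a.e.\ vertex stabilizes in a finite piece. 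Once the exceptional set is pushed into a well-behaved $I(\mathcal{G}')$, the subgraph $\mathcal{G}'$ is substantial and Lemma~\ref{lem:technical} closes up the construction, establishing the proposition.
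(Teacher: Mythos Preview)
Your proposal has a minor error and a genuine gap, and it diverges from the paper's argument at the crucial step.

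The minor point: the hypothesis is only that $\mathcal{T}$ admits a Borel \emph{selection} of two ends, not that $\mathcal{T}$ has exactly two ends. In the mcp setting hyperfinite spanning trees can have infinitely many ends, so your claim that $\mathcal{T}\setminus E(\ell)$ is a forest of \emph{finite} trees is unjustified. This is not fatal --- the Voronoi machinery from Lemma~\ref{lem:technical} handles infinite hanging pieces --- but it means your first paragraph oversimplifies.

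The real gap is in your second paragraph. The ``parent'' assignment you describe (each vertex points to the next annulus of the exhaustion) produces a one-ended spanning \emph{forest}: every vertex has out-degree one and parent-paths escape, so components are one-ended trees. But nothing forces a single such tree per $\mathcal{G}$-component, so the resulting subgraph need not be substantial. Notice also that once the parent assignment takes over, you never use the spine $\ell$; you are effectively trying to build a one-ended spanning tree of a one-ended hyperfinite mcp graph from scratch, which is the content of the main theorem. The editing procedure of Claim~\ref{claim:partition} controls the measure of infinite Voronoi cells, but it does not merge distinct one-ended components of a forest.

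The paper's argument avoids this entirely. It uses the spine $\mathcal{L}$ precisely for \emph{connectivity}: Voronoi-attach all of $X\setminus V(\mathcal{L})$ to $\mathcal{L}$ via the $A_\infty$ analysis of Lemma~\ref{lem:technical} (iterated Claim~\ref{claim:partition} plus Borel--Cantelli to make the cells finite a.e.), take a spanning tree of each finite cell, and union with $\mathcal{L}$. Because everything hangs off a single bi-infinite line, the result $\mathcal{T}'$ is a connected, measurable, \emph{two}-ended spanning tree of $\mathcal{G}$. The passage from two ends to one is then outsourced to \cite[Lemma~2.9]{cgmtd}, which carries out exactly the ``merge the two ends using one-endedness of $\mathcal{G}$'' step you were gesturing at. So the missing idea is: use the spine to guarantee connectivity, accept a two-ended intermediate tree, and cite the existing lemma for the final conversion rather than attempting it by hand.
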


\begin{proof}
    If $\mathcal T$ admits a Borel selection of two ends in a.e.\ component, there exists a Borel graph $\mathcal L \subset \mathcal T$ such that a.e.\ $\mathcal T$-connected component contains exactly one $\mathcal L$-connected component and this connected component is a bi-infinite line. 
    
    The proof follows as in the analysis of $A_\infty$ in Lemma \ref{lem:technical}. Let $Y$ denote the Borel subset of vertices of the bi-infinite lines in $\mathcal L$ and $Z := X \backslash Y$. A Voronoi tessellation of $Z$ defined as in Equation (\ref{eq:voronoi}) yields a Borel partition $(V_x)_{x\in Y}$ of $X$ such that for each $x\in Y$, the induced subgraph $\mathcal G [V_x]$ is connected and $x \in V_x$. Applying Claim \ref{claim:partition} iteratively with suitably chosen parameters as in the end of the proof of Lemma \ref{lem:technical} we obtain a partition $(W_x)_{x\in Y}$ of $X$ such that $|W_x|<\infty$ for a.e.\ $x\in X$, the induced graph $\mathcal G[W_x]$ is connected, and $x\in X$. 
    
    Letting $\mathcal T_x$ denote a spanning tree of $\mathcal G[W_x]$ for each $x\in Y$, it follows that the graph $\mathcal T' := \mathcal L \cup \left(\bigcup \mathcal T_x\right)$ is a measurable two-ended spanning tree of $\mathcal G$. Then $\mathcal G$ admits a measurable one-ended spanning tree by \cite[Lemma 2.9]{cgmtd}.    
\end{proof}

Hence for the purposes of proving Theorem~\ref{measure_main} we may assume that the spanning tree $\mathcal{T}$ of $\mathcal{G}$ admits a Borel selection of exactly one end in each of its components. For an edge $(x,y)$ in such a tree $\mathcal{T}$ let $\max_\mathcal{T}\{x,y\}$ be the point that is further from the selected end. We now extend the relative weight function $\w$, as in Section~\ref{sec:prelim_measured}, to the edges of $\mathcal{T}$ by setting $\w^y(x,y):=\w^y(\max_\mathcal{T}\{x,y\})$.

By Lemma \ref{2 ends} and \cite[Theorem E]{miller.2end} we can assume that $\mathcal{T}$ has infinitely many ends. In particular, there are vertices with $\mathcal{T}$-degree at least $3$.  Call maximal $\mathcal{T}$-paths all of whose vertices have $\mathcal{T} $-degree $2$ a \textbf{segment}.  Call an edge $e\in \mathcal{G}\setminus \mathcal{T}$ \textbf{redundant} if it goes between two vertices on the same segment.  Note that segments may be of infinite length (in which case it also has to be $\w$-finite) but cannot be bi-infinite, as otherwise they would be a $2$-ended spanning tree of their component contradicting to the fact that $\mathcal{T}$ has infinitely many ends.

\begin{lemma}
    Removing the redundant edges from $\mathcal{G}$ produces a substantial Borel subgraph.
\end{lemma}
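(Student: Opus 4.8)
The goal is to show that removing all redundant edges from $\mathcal{G}$ yields a substantial subgraph. Recall that a redundant edge is one in $\mathcal{G} \setminus \mathcal{T}$ joining two vertices on the same segment, where a segment is a maximal $\mathcal{T}$-path of degree-2 vertices. Let $\mathcal{G}'$ denote the subgraph obtained by deleting all redundant edges. Since being redundant is a Borel property (it depends only on the Borel data of $\mathcal{T}$, the segment structure, and whether both endpoints lie on a common segment), $\mathcal{G}'$ is a Borel subgraph, and the set of redundant edges forms a Borel subset of $\mathcal{G} \setminus \mathcal{T}$.

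The plan is to verify the two defining properties of substantiality separately, and the key point is that deleting redundant edges changes neither the connectivity relation nor the end structure of any component. First I would argue that $\mathcal{R}_{\mathcal{G}'} = \mathcal{R}_{\mathcal{G}}$: since $\mathcal{T} \subseteq \mathcal{G}'$ and $\mathcal{T}$ is a spanning tree, every component of $\mathcal{G}$ remains connected in $\mathcal{G}'$, so no splitting of components occurs. Second, and this is the crux, I would show $\mathcal{G}'$ is one-ended. For this I intend to apply Proposition~\ref{prop:stays.one.ended}. The natural choice is to let $\mathcal{F}$ be a fber whose classes are $\mathcal{G}$-connected and whose edges are $\mathcal{T}$-connected, and let $Q$ be the redundant edges. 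The essential observation is that a redundant edge $e$ joins two vertices $u, v$ on a single segment $S$; since $S$ is itself a path in $\mathcal{T} \subseteq \mathcal{G}'$, the endpoints $u,v$ remain connected in $\mathcal{G}'$ through the $\mathcal{T}$-portion of the segment between them. Thus every redundant edge's endpoints stay connected after deletion, which is precisely the hypothesis needed to invoke Proposition~\ref{prop:stays.one.ended} (once the redundant edges are organized into a fber with $\mathcal{G}'$-connected classes, e.g.\ by taking $\mathcal{F}$ to contain the short segment-arc between the endpoints of each redundant edge).

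More carefully, the cleanest route avoids building an elaborate fber and instead argues directly along the lines of the proof of Proposition~\ref{prop:stays.one.ended}. Fix a component $G$ and let $H := G \setminus Q$ be the result of deleting redundant edges. Given any finite edge set $C \subset E(H)$, one-endedness of $\mathcal{G}$ gives a unique infinite component $U$ of $G$ after removing a suitable finite vertex set $K$, with $V(G) \setminus U$ finite. To show $U$ is connected in $H \setminus C$, take a path $P$ in $G$ between two vertices of $U$; whenever $P$ uses a redundant edge $e = (u,v)$, replace $e$ by the arc of the segment $S_e$ joining $u$ to $v$, which lies entirely in $\mathcal{T} \subseteq H$ and uses no redundant edge. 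Since segments are not bi-infinite and each redundant edge's substituting arc is finite (and $\w$-finite when the segment is infinite in length, but the arc between two fixed endpoints is always finite), this rerouting produces a path in $H$ between the same endpoints, avoiding $Q$. Enlarging $K$ to a finite superset containing all the relevant segment-arcs as needed keeps the argument finite, and we conclude that $H \setminus C$ has a unique infinite component, so $\mathcal{G}'$ is one-ended.

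Finally, combining the two properties: every $\mathcal{G}'$-component is one-ended (hence not an isolated vertex, since $\mathcal{T}$ is leafless by the standing assumption $\mathrm{Core}_{\mathcal{T}}(\mathcal{G}) = \mathcal{T}$), and since $\mathcal{R}_{\mathcal{G}'} = \mathcal{R}_{\mathcal{G}}$ each one-ended $\mathcal{G}$-component contains exactly one $\mathcal{G}'$-component, which is one-ended; this establishes substantiality. The main obstacle I anticipate is the bookkeeping in the rerouting step: one must ensure the finite set $K$ can be enlarged to absorb all segment-arcs substituted for the finitely many redundant edges crossed by the finitely many paths witnessing connectivity of $U$, without destroying the cofiniteness of $U$. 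Because each redundant edge lies on a single segment and only finitely many are encountered, this enlargement stays finite, and the argument of Proposition~\ref{prop:stays.one.ended} goes through essentially verbatim with segment-arcs playing the role of the $\mathcal{F}$-saturation paths.
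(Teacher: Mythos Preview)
Your proposal is correct and follows essentially the same approach as the paper: both note that $\mathcal{T} \subseteq \mathcal{G}'$ preserves the connectedness relation, and both establish one-endedness by rerouting each redundant edge through the corresponding segment arc in $\mathcal{T}$. The paper's version works directly with a finite edge set $K \subset E(G')$ rather than passing through the vertex-set framework of Proposition~\ref{prop:stays.one.ended}, but the core rerouting idea---and the minor bookkeeping you flag about segment arcs possibly meeting $K$---is the same.
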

\begin{proof}
    The end-vertices of a redundant edge are connected by a segment in $\mathcal{T}$. Since no edge in $\mathcal{T}$ is redundant, removing redundant edges preserve connectivity. It thus suffices to show that the Borel graph $\mathcal{G}'$ obtained by removing the redundant edges from $\mathcal{G}$ is one-ended. 
    
    To this end consider a connected component $G$ of $\mathcal{G}$, containing the corresponding connected components $G'$ of $\mathcal{G}'$ and $T$ of $\mathcal{T}'$. Let $K$ be a finite subset of edges of $G'$ and $u,v\in V(G)$ vertices in infinite connected components of $G' \backslash K$. In particular both $u$ and $v$ lie in infinite connected components of $G \backslash K$, so by one-endedness there exists a path $P \subset G \backslash K$ from $u$ to $v$. Any redundant edge in $P$ may be substituted by a path in some segment of $\mathcal{T}$. Therefore, we may assume without loss of generality that $P \subset G' \backslash K$. It follows that $G' \backslash K$ has a unique infinite connected component. Since $K$ was arbitrary, we deduce that $G'$ is one-ended, concluding the proof. 
    \end{proof}

Observe that an edge in $\mathcal  G\setminus \mathcal  T$ is not redundant if and only if the unique fundamental cycle it induces contains a vertex with $\mathcal{T}$-degree at least three.  Removing redundant edges, we may assume that every $\mathcal{G}\setminus \mathcal{T}$-edge induces such a fundamental cycle.

Consider any path that consists of vertices with $\mathcal{T}$-degree $2$ that are not adjacent to any of the edges in $\mathcal{G}\setminus \mathcal{T}$.
Such paths necessarily have finite length, or we would have contradicted the fact that $\mathcal G$ is one-ended;
indeed if there was such an infinite path it would constitute an isolated ray in $\mathcal{G}$, hence contradicting one-endedness assumption on $\mathcal{G}$. We can safely contract this path into an edge whose weight is set to be equal to the total $\w$-weight of the edges in the path. Equivalently, one can collapse the vertices in this path with the vertex in its outer boundary point that is furthest the from the selected end in $\mathcal{T}$ yielding a single vertex whose weight is set to be equal to the total $\w$-weight of the path and the $\w$-weight of the end point.
In particular, after doing this contraction we can assume that all vertices in non-trivial $\mathcal{G}$-components have $\mathcal{T}$-degree at least $3$ or else have $\mathcal{T}$-degree $2$ and at least one edge in $\mathcal{G}\setminus \mathcal{T}$. 

Recapping, we can assume that every edge in $\mathcal{T}$ is in infinitely many fundamental cycles and that every vertex in a non-trivial $\mathcal{G}$ component either has $\mathcal{T}$-degree at least $3$ or has $\mathcal{T}$-degree $2$ and at least one edge that induces a fundamental cycle meeting a vertex of $\mathcal{T}$-degree at least $3$.  The latter follows from our discussion in the preceding paragraph and the fact that $\mathcal{T}$ has no degree $1$ vertices since we mar assume that $\mathrm{Core}_{\mathcal{T}}(\mathcal{G})=\mathcal T$ by Remark \ref{nocore}.

\begin{lemma}\label{3con}
    Every non-trivial $\mathcal{G}$ component is $3$-edge-connected.
\end{lemma}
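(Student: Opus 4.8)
The plan is to show that no non-trivial component $G$ of $\mathcal{G}$ admits an edge cut of size at most $2$; by Menger's theorem this is exactly $3$-edge-connectivity. Write $T := G \cap \mathcal{T}$ for the induced spanning tree and recall the standing assumptions: $\mathrm{Core}_{T}(G) = T$, so every tree edge lies in infinitely many fundamental cycles; $T$ has no leaves; and every vertex of $T$-degree $2$ carries a non-tree edge whose fundamental cycle meets a vertex of $T$-degree at least $3$. The first observation I would record is that, since $G$ is one-ended and locally finite, deleting any finite edge set leaves exactly one infinite component. Hence whenever a set of one or two edges is a cut, writing the two sides as $A \sqcup B$, I may assume $B$ is finite and that every $G$-edge joining $A$ to $B$ is among the deleted ones.

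The recurring bookkeeping principle, which I would state once and reuse, is that the fundamental cycle of a non-tree edge $f$ passes through a tree edge $e$ precisely when $f$ joins the two sides of $T - e$; thus "$e$ lies in infinitely many fundamental cycles'' translates into "infinitely many non-tree edges join the two sides of $T - e$.'' I would first dispose of $1$-edge cuts with this. If $e$ is a bridge with finite side $B$, then $e$ is the unique crossing edge; since $T$ is spanning and connected, some tree edge must cross, forcing $e \in T$. But then no non-tree edge crosses, so $e$ lies in no fundamental cycle, contradicting $e \in \mathrm{Core}_{T}(G)$. Hence $G$ has no bridge.

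For a $2$-edge cut $\{e_1,e_2\}$ with sides $A \sqcup B$ and $B$ finite, at least one $e_i$ must be a tree edge (otherwise $T$ is disconnected). If exactly one, say $e_1 \in T$, then $e_1$ is the only crossing tree edge, so $e_2$ is the only non-tree edge crossing, and $e_1$ sits in a single fundamental cycle — again contradicting the core condition. This leaves $e_1,e_2 \in T$. Deleting them from $T$ produces three subtrees $T_1,T_2,T_3$ in a path configuration ($e_1$ joining $T_1,T_2$ and $e_2$ joining $T_2,T_3$); since no non-tree edge crosses the cut, each $T_i$ lies wholly in $A$ or in $B$, and the crossing pattern forces $T_2$ on one side with $T_1 \cup T_3$ on the other. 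If $T_1 \cup T_3 = B$ is the finite side, the fundamental cycles through $e_1$ correspond to non-tree edges between $T_1$ and $T_3$, all living inside the finite set $B$, so there are only finitely many — contradicting the core condition.

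The remaining and most delicate configuration, which I expect to be the crux, is $T_2 = B$ finite and $T_1 \cup T_3 = A$ infinite; here the core condition is harmless, since infinitely many $T_1$–$T_3$ non-tree edges can sit inside $A$, so I must instead exploit the degree hypotheses. As $T$ has no leaves and only $e_1,e_2$ can lift the $T$-degree of a leaf of $T_2$, the subtree $T_2$ has at most two leaves and is therefore a path whose two endpoints receive $e_1$ and $e_2$; consequently every vertex of $B$ has $T$-degree exactly $2$. Now any non-tree edge incident to a vertex of $B$ stays inside $B$ (a crossing edge would be a third cut edge), so its fundamental cycle is that edge together with a $T$-path inside the subtree $T_2$, hence never leaves $B$ and never meets a vertex of $T$-degree at least $3$ — contradicting the standing degree hypothesis and eliminating the last case. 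I anticipate the main obstacles to be the clean reduction of one-endedness to the "one finite side'' statement and the precise fundamental-cycle bookkeeping; notably, the infinitely-many-ends assumption does not appear to be needed for this lemma itself.
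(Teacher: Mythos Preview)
Your proof is correct and reaches the same endgame as the paper: the finite side of a putative small edge-cut is a $\mathcal{T}$-path of degree-$2$ vertices, contradicting the standing hypothesis that every such vertex carries a non-tree edge whose fundamental cycle meets a vertex of $\mathcal{T}$-degree at least $3$. The reduction to that configuration differs, however. You run an explicit case analysis on how many of the cut edges lie in $\mathcal{T}$ and invoke the core condition $\mathrm{Core}_\mathcal{T}(\mathcal{G}) = \mathcal{T}$ (``every tree edge lies in infinitely many fundamental cycles'') to eliminate the bridge case, the one-tree-edge case, and the subcase where the two outer pieces $T_1 \cup T_3$ form the finite side. The paper instead observes that, since $\mathcal{T}$ has no leaves, every vertex of $\mathcal{T}$-degree at least $3$ is the start of three edge-disjoint rays in $\mathcal{T}$, so removing any two edges still leaves such a vertex in an infinite component; this forces the finite side to contain only degree-$2$ vertices, whence it must be a segment bounded by the two cut edges. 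Your route is more elementary and makes fuller, direct use of the core hypothesis; the paper's ray argument is quicker but leaves the passage from ``only degree-$2$ vertices on the finite side'' to ``both cut edges are in $\mathcal{T}$ and isolate a single subpath'' implicit.
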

\begin{proof}
    Towards a contradiction suppose that there are two edges $e,f$ whose removal cuts a $\mathcal{G}$-connected component $G$ into multiple connected components. Since $G$ is one-ended, exactly one of the connected components of $G \backslash \{e,f\}$ is infinite.  Since $\mathcal{T}$ has degree at least 2, every vertex of $G$ is the first vertex of three edge-disjoint rays. Therefore, any connected component of $G \backslash \{e,f\}$ containing a vertex of $\mathcal{T}$-degree at least 3 must be infinite. Since there is exactly one infinite connected component of $G \backslash \{e,f\}$, it follows that all vertices with $\mathcal{T}$-degree at least 3 belong to the same $G \backslash \{e,f\}$ connected component. 
    
    In particular, we may assume that both $e$ and $f$ are on a $\mathcal{T}$-degree two path and isolate a finite subpath.  But every vertex on this subpath has an edge whose fundamental cycle meets a $\mathcal{T}$-degree $3$ vertex, a contradiction.
\end{proof}

A graph $\mathcal{G}$ is \textbf{locally} $3$-\textbf{edge connected} if for every $\{x,y\}\in E(G),$ there are $3$ edge-disjoint paths from $x$ to $y.$ 
\begin{lemma}\label{Menger}
    A graph is locally $3$-edge-connected if and only if each of its connected components is $3$-edge-connected.
\end{lemma}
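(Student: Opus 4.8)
The plan is to prove the two implications separately, with the backward direction being essentially a restatement of Menger's theorem and the forward direction carrying all of the content. For the backward direction, I would argue that if every connected component is $3$-edge-connected, then by Menger's theorem every pair of vertices lying in a common component---and in particular every pair of adjacent vertices---is joined by $3$ pairwise edge-disjoint paths, which is exactly local $3$-edge-connectivity. No further work is needed here.

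The forward direction is where I would concentrate the argument. Assume $G$ is locally $3$-edge-connected and fix a connected component $G_0$; the goal is to show that no set of at most two edges disconnects $G_0$, which by Menger's theorem is equivalent to $3$-edge-connectivity of $G_0$. First I would suppose toward a contradiction that some $S \subseteq E(G_0)$ with $|S| \leq 2$ disconnects $G_0$. Writing $A$ for one connected component of $G_0 - S$ and $B := V(G_0) \setminus A$, both parts are nonempty, and since $A$ is a component of $G_0 - S$ there are no edges between $A$ and $B$ in $G_0 - S$; hence every edge of $G_0$ joining $A$ to $B$ must lie in $S$. Because $G_0$ is connected there is at least one such crossing edge, so I may fix an edge $\{x,y\} \in E(G_0)$ with $x \in A$ and $y \in B$.

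The key step is then to apply the hypothesis to this single edge. By local $3$-edge-connectivity there are $3$ pairwise edge-disjoint paths from $x$ to $y$ in $G_0$. Each such path begins in $A$ and ends in $B$, so it must traverse at least one edge joining $A$ to $B$; since the three paths are pairwise edge-disjoint, these crossing edges are distinct, giving at least $3$ edges of $G_0$ between $A$ and $B$. As all such edges lie in $S$, this forces $|S| \geq 3$, contradicting $|S| \leq 2$ and completing the forward direction.

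I expect the main point---more a matter of careful bookkeeping than a genuine obstacle---to be the handling of the cut: confirming that a crossing edge exists (which uses connectivity of $G_0$) and that every crossing edge is counted inside $S$ (which uses that $A$ is an \emph{entire} component of $G_0 - S$). One should also be mindful that the components here may be infinite, so I would invoke Menger's theorem in the edge version valid for locally finite graphs; since the separating set $S$ is finite, this causes no difficulty. An alternative route, which I would mention as a remark, is to establish the submodular-type inequality $\lambda(u,w) \geq \min\{\lambda(u,v),\lambda(v,w)\}$ for the local edge-connectivities $\lambda(\cdot,\cdot)$ and then chain it along any path between two given vertices, using $\lambda \geq 3$ across each edge of the path; this yields $\lambda(u,v) \geq 3$ for all $u,v$ in a common component directly.
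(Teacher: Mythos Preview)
Your proof is correct and follows the same approach as the paper. The paper's argument is the one-line hint ``look at a potential edge cut of size $2$ and consider the third path between one of the cut edges,'' and your write-up simply unpacks this: pick a crossing edge $\{x,y\}$ of a purported $2$-edge cut, invoke the three edge-disjoint $x$--$y$ paths, and count crossings to force at least three cut edges. Your extra care in noting that the crossing edge exists by connectivity of $G_0$ and that all crossings lie in $S$ because $A$ is a full component of $G_0 - S$ is exactly the bookkeeping the paper's hint suppresses.
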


\begin{proof}
    The if direction is Menger's theorem.  For the only if direction, look at a potential edge cut of size $2$ and consider the third path between one of the cut edges.
\end{proof}

We are finally ready for our last edge deletion procedure.

\begin{proof}[Proof of Proposition~\ref{sparse_substential}]
    Let $(\mathcal{G}_n)_{n\in \mathbb N}$ be a witness to hyperfiniteness of $\mathcal{G}$ and $\mathcal{T}$ be the spanning subtree of $\mathcal{G}$. In light of Proposition~\ref{2 ends} we may assume that $\mathcal{T}$ admits a Borel selection of a unique end in a.e.~$\mathcal{T}$-component (in case of selection of 2 ends, Proposition \ref{2 ends} yields a stronger conclusion). As above, extend the relative weight function $\w$ to the edges of $\mathcal{T}$ and set it to be equal to $0$ for the edges in $\mathcal{G}\setminus\mathcal{T}$.
    
    Fix $\eps>0$. By Lemma~\ref{3con} $\mathcal{G}$ is $3$-edge-connected and by Lemma~\ref{Menger} it is locally $3$-edge-connected. Thus there is $N\in \mathbb N$ sufficiently large such that
    $$
        \mu\left((y,x) \in \mathcal{G} : \exists\ 3\ \text{edge-disjoint paths in $\mathcal{G}_N$ between}\ x\ \text{and}\ y\right) > (1 - \varepsilon) \mu (\mathcal{G}).
    $$
    In each $\mathcal{G}_N$-connected component contract edges whose end-vertices are not contained in $3$-edge disjoint paths. This results in a Borel family of finite locally $3$-edge-connected graphs. By Lemma~\ref{Menger} these graphs are $3$-edge-connected and account for $1-\varepsilon$ fraction of the total edge measure in $\mathcal{G}.$ 
    
    Duplicate each edge (giving each new edge the same $\w$-weight as it used to have), producing a $6$-edge-connected multigraph.  By the Nash-Williams theorem \cite{nash1961edge} there are three edge-disjoint spanning trees of this multigraph.  We claim that one of them corresponds to a spanning tree of the unduplicated graph that has at most $2/3$ of the total $\w$-weight.

    To see this, note that each edge in the original graph appears at most twice in the union of edges from the three spanning trees.  In particular, the total $\w$-weight of the spanning trees is at most twice of the original weight of the graph, and hence one of the trees has at most $2/3$ of the total $\w$-weight.

    Finally, delete edges in each $\mathcal{G}_n$ component that are not in the spanning tree we have just produced.  Note that the resulting subgraph is still one-ended by Proposition \ref{prop:stays.one.ended}.
\end{proof}

\section{Implications for percolation theory}\label{sec:perc}
We briefly recall the background for percolation theory and refer the reader to \cite{LyonsBook,PetePGG} for further details. Percolation theory studies random subgraphs of a given countable graph $G$. To be precise, a bond \textbf{percolation process} $\mathbf{P}$ on a graph $G=(V,E)$ is a probability measure on $2^E$. We say that percolation is $\Gamma$-invariant, for a subgroup $\Gamma\subseteq \Aut(G)$, if its law is invariant under the action of $\Gamma$ on $G$. Percolation theory on quasi-transitive graphs dates back to the 1990s \cite{BSbeyond,Haggstrom99,BLPS99inv} and is largely divided into two further settings: the unimodular and nonunimodular. One way to define this notion for a locally finite connected graph is to call such a graph $G$ \textbf{unimodular} if its automorphism group $\mathrm{Aut}(G)$ is unimodular (meaning that the left-invariant Haar measure is also right-invariant); otherwise, $G$ is \textbf{nonunimodular}. We note that for most percolation results in the nonunimodular setting, it is enough to assume the existence of a closed nonunimodular subgroup $\Gamma\subseteq \Aut(G)$; hence, such results are often of interest even in the unimodular setting, e.g.\ \cite{Hutchcroft20}.

The assumption of unimodularity is in many ways parallel to the assumption of probability measure preservation in measured combinatorics and group theory. In fact, the connection between these fields goes beyond a similarity in themes and techniques: results in one often enable advances in the other \cite{Gaboriau05,GL09,CTT22,wamen,HeavyClusterRepulsion}. The main bridge between the two theories is the cluster graphing construction introduced by Gaboriau \cite[Section 2]{Gaboriau05}, see \cite[Section 5]{CTT22} for the presentation tailored to the nonunimodular/mcp settings. 

In the mcp setting, the relative weight function on a Borel graph is induced by the Radon–Nikodym cocycle of its connectedness relation with respect to the underlying measure. For a quasi-transitive graph, it is induced instead by the Haar measure defined on the group of automorphisms. Let $\Gamma$ be a closed subgroup of $\Aut(G)$ that acts quasi-transitively on $G$. The relative weight function (i.e.\ a cocycle) $\w_\Gamma:V^2\to\R^+$ is given by the modular function $\w^y_{\Gamma}(x)=m(\Gamma_x)/m(\Gamma_y)$, where $\Gamma_v := \{\gamma\in\Gamma \mid \gamma v=v\}$ is the stabilizer of $v\in V$ and $m$ is the Haar measure on $\Gamma$. It also has a more combinatorial representation \begin{equation}\label{def:haarweights} 
\w^y_{\Gamma}(x):=\frac{m(\Gamma_x)}{m(\Gamma_y)}=\frac{\abs{\Gamma_x y}}{\abs{\Gamma_y x}}.
\end{equation}
Note that $\Gamma$ is unimodular if and only if $\w_\Gamma^x(y)\equiv 1$ for all $x, y \in V$ from the same $\Gamma$-orbit. 

The notions of amenability and hyperfiniteness are of classical interest in percolation theory, for formal definitions see \cite{BSbeyond,BLPS99inv,URG,wamen}. The equivalences among them were shown in \cite{BLPS99inv} and are analogous to the corresponding results in measured group theory \cite{cfw,kaimanovich1997amenability}.

We are now ready to present the implication (or a translation) of Theorem~\ref{measure_main} to this setting.
\begin{proof}[Proof of Corollary~\ref{perc_mian}]
    The proof follows immediately from the cluster graphing construction; indeed, \cite[Theorem 3.9]{BLPS99inv}, and \cite[Theorem 1.4 and Lemma 4.2]{wamen}, which translate amenability of $\Gamma$ to hyperfiniteness of the corresponding cluster graphing, where Theorem~\ref{measure_main} applies. 
    Given a one-ended spanning tree of the cluster graphing, one can derive the same for $G$ easily via the family of component-wise isomorphisms between the connected components of the cluster graphing and $G$.
    
    Alternatively, one can follow our argument in proof of Theorem~\ref{measure_main} almost verbatim to derive the corollary. Indeed, by \cite[Theorem 5.1]{BLPS99inv} the graph $G$ admits a hyperfinite exhaustion. Repeating the steps in our proofs yields a $\Gamma$-invariant random spanning tree that is one-ended almost surely.

    Since the proof of \cite[Theorem 5.1]{BLPS99inv} actually constructs the hyperfinite exhaustion as a factor of i.i.d.\ and the rest of our proof is deterministic, the resulting one-ended random spanning tree is also a factor of i.i.d.
\end{proof}

\section*{Acknowledgments}

M.B. is supported by Ben Green's Simons Investigator Grant number 376201.\\
H.J.S. was supported by the ERC Starting Grant “Limits of Structures in Algebra and Combinatorics” No.~805495 and the Dioscuri program initiated by the Max Planck Society, jointly managed by the National Science Centre (Poland), and mutually funded by the Polish Ministry of Science and Higher Education and the German Federal Ministry of Education and Research.\\
G.T. was supported in part by the RTG award grant (DMS-2134107) from the NSF and by the ERC Synergy Grant No. 810115 - DYNASNET. 
\bibliographystyle{amsalpha} 
\bibliography{editable} 
\end{document}